\def\BibTeX{{\rm B\kern-.05em{\sc i\kern-.025em b}\kern-.08em
    T\kern-.1667em\lower.7ex\hbox{E}\kern-.125emX}}
\newtheorem{thm}{Theorem}[section]
\newtheorem{cor}[thm]{Corollary}
\newtheorem{lem}[thm]{Lemma}
\newtheorem{prop}[thm]{Proposition}
\newtheorem{rem}[thm]{Remark}
\newtheorem*{claim*}{Claim}
\theoremstyle{definition}
\numberwithin{equation}{section}
\begin{document}
\allowdisplaybreaks

\title[Moments and ergodicity of the JCIR process] 
{Moments and ergodicity of the jump-diffusion CIR process}

\author{Peng Jin, Jonas Kremer and Barbara R\"udiger}
\address[Peng Jin]{Fakult\"at f\"ur Mathematik und Naturwissenschaften\\
        Bergische Universit\"at Wuppertal\\
        42119 Wuppertal, Germany}
\email[Peng Jin]{jin@uni-wuppertal.de}
\address[Jonas Kremer]{Fakult\"at f\"ur Mathematik und Naturwissenschaften\\
        Bergische Universit\"at Wuppertal\\
        42119 Wuppertal, Germany}
\email[Jonas Kremer]{j.kremer@uni-wuppertal.de}
\address[Barbara R\"udiger]{Fakult\"at f\"ur Mathematik und Naturwissenschaften\\
        Bergische Universit\"at Wuppertal\\
        42119 Wuppertal, Germany}
\email[Barbara R\"udiger]{ruediger@uni-wuppertal.de}

\date{\today}

\subjclass[2010]{Primary 60J25, 37A25; Secondary 60J35, 60J75}

\keywords{CIR model with jumps, ergodicity, exponential ergodicity, Forster-Lyapunov functions, fractional moments}

\maketitle

\begin{abstract}
We study the jump-diffusion CIR process, which is an extension of the  Cox-Ingersoll-Ross model and whose jumps are introduced by a subordinator. We provide sufficient conditions on the L\'evy measure of the subordinator under which the jump-diffusion CIR process is ergodic and exponentially ergodic, respectively. Furthermore, we characterize the existence of the $\kappa$-moment ($\kappa>0$) of the jump-diffusion CIR process by an integrability condition on the L\'evy measure of the subordinator.
\end{abstract}


\section{Introduction}
\label{intro}

In the present paper, we study the jump-diffusion CIR (shorted as
JCIR) process, which is an extension of the well-known Cox-Ingersoll-Ross
(shorted as CIR) model introduced in \cite{MR785475}. The JCIR process
$X=(X_{t})_{t\geqslant0}$ is defined as the unique strong solution
to the stochastic differential equation (SDE)
\begin{equation}
\mathrm{d}X_{t}=(a-bX_{t})\mathrm{d}t+\sigma\sqrt{X_{t}}\mathrm{d}B_{t}+\mathrm{d}J_{t},\quad t\geqslant0,\quad X_{0}\geqslant0\text{ a.s.},\label{eq:SDE JCIR}
\end{equation}
where $a\geqslant0$, $b>0$, $\sigma>0$ are constants, $(B_{t})_{t\geqslant0}$
is a one-dimensional Brownian motion and $(J_{t})_{t\geqslant0}$
is a pure jump L\'evy process with its L\'evy measure $\nu$ concentrating
on $(0,\infty)$ and satisfying
\begin{equation}
\int_{0}^{\infty}(z\wedge1)\nu(\mathrm{d}z)<\infty.\label{eq:levy measure property}
\end{equation}
We assume that $X_{0}$, $(B_{t})_{t\geqslant0}$ and $(J_{t})_{t\geqslant0}$
are independent. Note that the existence of a unique strong solution
to \eqref{eq:SDE JCIR} is guaranteed by \cite[Theorem 5.1]{MR2584896}.

The importance of the the CIR model and its extensions has been
demonstrated by their vast applications in mathematical finance, see,
e.g., \cite{MR785475,Duffie01,MR1850789,MR3343292}, and many others.
Since the CIR process is non-negative and mean-reverting, it is particularly
popular in interest rates and stochastic volatility modelling. These important features are inherited by
the JCIR process defined in \eqref{eq:SDE JCIR}. Moreover, compared to the CIR model, the JCIR process
has included possible jumps in it, which seems to make it a more appropriate
model to fit real world interest rates or volatility of asset prices.
As an  application of the JCIR process, Barletta and Nicolato
\cite{barletta2016orthogonal} recently studied a stochastic volatility
model with jumps for the sake of pricing of VIX options, where the volatility (or instantaneous variance process) of the asset
price process is modelled via the JCIR process.

An important issue for applications of the JCIR process is the estimation
of its parameters. To estimate the parameters of the original CIR
model, one can build some conditional least
square estimators (CLSEs) based on discrete observations (see Overbeck and Ryd\'en \cite{MR1455180}),
or some maximum likelihood estimators (MLEs) through continuous time
observations (see Overbeck \cite{MR1614256}, Ben Alaya and Kebaier \cite{MR2995525,MR3175784}). The parameter estimation
problem for the JCIR process is more complicated, since it has an
additional parameter $\nu$, which is the L\'evy measure of the driving noise $(J_{t})_{t\geqslant0}$ in \eqref{eq:SDE JCIR}
and thus an infinite dimensional object. Nevertheless, based on low frequency observations, Xu \cite{2013arXiv1310.4021X}
proposed some nonparametric estimators for $\nu$, given that $\nu$
is absolutely continuous with respect to the Lebesgue measure. Barczy \emph{et al.} \cite{BARCZY2017} studied also the maximum likelihood
estimator for the parameter $b$ of the JCIR process.

As seen in the aforementioned works \cite{BARCZY2017} and \cite{2013arXiv1310.4021X}, to study the fine properties of the estimators, a comprehension of the moments and long-time behavior
of the JCIR processes is required. In this paper we focus on these
two problems and analyze their subtle dependence on the big jumps of $(J_{t})_{t\geqslant0}$. Our first main result is
a characterization of the existence of moments of the JCIR process
in terms of the L\'evy measure $\nu$ of $(J_{t})_{t\geqslant0}$, namely,
we have the following:

\begin{thm}\label{thm:fractional moments} Consider the JCIR process $X=(X_{t})_{t\geqslant0}$ defined in \eqref{eq:SDE JCIR}. Let $\kappa>0$ be a constant.
Then the following three conditions are equivalent:
\begin{enumerate}
\item[(i)] $\mathbb{E}_x[X_t^{\kappa}]<\infty$ for all $x\in\mathbb{R}_{\geqslant0}$ and $t>0$,
\item[(ii)] $\mathbb{E}_x[X_t^{\kappa}]<\infty$ for some $x\in\mathbb{R}_{\geqslant0}$ and $t>0$,
\item[(iii)] $\int_{\lbrace z>1\rbrace}z^{\kappa}\nu(\mathrm{d}z)<\infty$,
\end{enumerate}
where the notation $\mathbb{E}_x[\cdot]$ means that the process $X$ considered under the expectation is with the initial condition $X_0=x$.
\end{thm}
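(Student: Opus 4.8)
The plan is to prove the cycle (i)$\Rightarrow$(ii)$\Rightarrow$(iii)$\Rightarrow$(i); the first implication is trivial. The backbone of both nontrivial steps is the variation-of-constants representation obtained by applying It\^o's formula to $e^{bt}X_t$, namely
\[
X_t = xe^{-bt}+\tfrac{a}{b}\bigl(1-e^{-bt}\bigr)+\sigma\int_0^t e^{-b(t-s)}\sqrt{X_s}\,\mathrm dB_s+\int_0^t e^{-b(t-s)}\,\mathrm dJ_s .
\]
Here the last integral is non-negative because \eqref{eq:levy measure property} forces $J$ to be a subordinator, and it makes transparent that the heavy tail of $X_t$ is inherited from that of $J$, hence governed by $\nu$. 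I will also repeatedly use that \eqref{eq:levy measure property} gives $\nu(\{z>1\})<\infty$, so the big jumps form a compound Poisson process.

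For (iii)$\Rightarrow$(i) I would run a Forster--Lyapunov argument with $V(x)=(1+x)^{\kappa}$, which lies in $C^2(\mathbb R_{\geqslant0})$ and dominates $x^\kappa$. First I compute the generator $\mathcal L V(x)=(a-bx)V'(x)+\tfrac{\sigma^2}{2}xV''(x)+\int_0^\infty[V(x+z)-V(x)]\,\nu(\mathrm dz)$ and bound the integral by splitting at $z=1$: the small jumps are controlled by the mean value theorem together with $\int_0^1 z\,\nu(\mathrm dz)<\infty$, and the large jumps by $(1+x+z)^\kappa\leqslant 2^{(\kappa-1)_+}\bigl((1+x)^\kappa+z^\kappa\bigr)$ together with (iii) and $\nu(\{z>1\})<\infty$. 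This yields a drift inequality $\mathcal L V\leqslant K_1 V+K_2$. I then localize with $\tau_n=\inf\{t\colon X_t\geqslant n\}$, apply Dynkin's formula on $[0,t\wedge\tau_n]$ (all integrands being bounded there), invoke Gronwall's lemma to get $\mathbb E_x[V(X_{t\wedge\tau_n})]\leqslant(V(x)+K_2t)e^{K_1t}$ uniformly in $n$, and finally let $n\to\infty$ using non-explosion (guaranteed by \cite[Theorem 5.1]{MR2584896}) and Fatou's lemma to conclude $\mathbb E_x[X_t^\kappa]\leqslant\mathbb E_x[V(X_t)]<\infty$ for every $x$ and $t$.

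For (ii)$\Rightarrow$(iii) I would argue by contraposition: assuming $\int_{\{z>1\}}z^\kappa\,\nu(\mathrm dz)=\infty$ (so $\lambda_1:=\nu(\{z>1\})\in(0,\infty)$), I show $\mathbb E_x[X_t^\kappa]=\infty$ for every $x$ and $t$. The key estimate is a lower bound $\mathbb E_y[X_r^\kappa]\geqslant c_t\,y^\kappa$ valid for all $y\geqslant y_0$ and $r\in[t/2,t]$. To obtain it I compare $X$ with the pure CIR process $X^{\mathrm{CIR}}$ driven by the same $B$ with the same parameters and initial value: since the jumps only add non-negative mass, $X_r\geqslant X^{\mathrm{CIR}}_r$. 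As the first two moments of $X^{\mathrm{CIR}}$ are explicit, with $\mathbb E_y[X^{\mathrm{CIR}}_r]=ye^{-br}+O(1)$ and $\operatorname{Var}_y(X^{\mathrm{CIR}}_r)=O(y)$ uniformly for $r\leqslant t$, Chebyshev's inequality gives $\mathbb P_y(X^{\mathrm{CIR}}_r\geqslant\tfrac12 ye^{-br})\geqslant\tfrac12$ once $y$ is large, and the $\kappa$-moment bound follows for every $\kappa>0$. Letting $\tau$ and $\xi$ denote the time and size of the first big jump, one has $X_\tau\geqslant\xi$ on $\{\tau<t/2\}$, so the strong Markov property yields $\mathbb E_x[X_t^\kappa]\geqslant c_t\,\mathbb P(\tau<t/2)\,\mathbb E[\xi^\kappa\mathbf 1_{\{\xi\geqslant y_0\}}]$; the final expectation equals $\lambda_1^{-1}\int_{\{z\geqslant y_0\}}z^\kappa\,\nu(\mathrm dz)=\infty$.

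I expect the necessity direction to be the main obstacle, precisely the uniform lower bound $\mathbb E_y[X_r^\kappa]\gtrsim y^\kappa$. The difficulty is that under the contrapositive hypothesis the moments of the full JCIR process need not exist, so the estimate cannot be read off from a moment formula for $X$; the comparison with $X^{\mathrm{CIR}}$, whose moments always exist, together with the second-moment concentration, is what makes the bound rigorous for all $\kappa>0$, including the range $0<\kappa<1$ where Jensen's inequality points in the unhelpful direction.
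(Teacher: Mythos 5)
Your proposal is correct, but it takes a genuinely different route from the paper. The paper exploits the affine structure: it factors the law of $X_t^x$ as the convolution $\mu_{Y_t^x}\ast\mu_{Z_t^1}\ast\mu_{Z_t^2}$ of the CIR law, a small-jump part with finite exponential moments, and a compound Poisson part whose summands are Bessel distributions $m_{\alpha(z,s),\beta(z,s)}$; the whole theorem then reduces to the two-sided estimates of $\int x^{\kappa}m_{\alpha,\beta}(\mathrm{d}x)$ in Lemma \ref{lem:bound of bessel distribution}, the upper bound giving (iii)$\Rightarrow$(i) and the lower bound giving (ii)$\Rightarrow$(iii). You instead prove sufficiency by a Foster--Lyapunov/Gronwall argument with $V(x)=(1+x)^{\kappa}$ and localization, and necessity by the ``one big jump'' mechanism: pathwise comparison with the jump-free CIR process, a Chebyshev concentration bound yielding $\mathbb{E}_y[X_r^{\kappa}]\geqslant c_t y^{\kappa}$ for large $y$, and the strong Markov property at the first jump of size exceeding $1$. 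Both arguments are sound for all $\kappa>0$ and all $x\geqslant0$, including $x=0$ and $0<\kappa<1$. Your route is more elementary and more robust---it uses no explicit characteristic function and would survive perturbations destroying the affine structure---and your Gronwall bound delivers Corollary \ref{prop:sup moments exist} as a by-product; the paper's route buys explicit control of the compound Poisson component, which it reuses in the positivity and ergodicity sections. Two details you should tidy up: the comparison $X_r\geqslant X_r^{\mathrm{CIR}}$ requires a citation to a comparison theorem for jump SDEs with H\"older-$\tfrac{1}{2}$ diffusion coefficients (e.g.\ the results of Fu and Li that the paper already invokes for non-negativity), and in Dynkin's formula on $[0,t\wedge\tau_n]$ the jump integrand $V(X_{s-}+z)-V(X_{s-})$ is not bounded in $z$ but only $\nu$-integrable under (iii), which is the property that actually makes the compensated Poisson integral a true martingale.
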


After this paper was finished, we noticed that moments of general $1$-dimensional CBI processes were recently studied in \cite{2017arXiv170208698J}. If $\kappa\geqslant 1$ and $x>0$, our Theorem \ref{thm:fractional moments} can be viewed as a special case of \cite[Theorem 2.2]{2017arXiv170208698J}. However, to the authors' knowledge, the cases $0<\kappa<1$ and $\kappa\geqslant0$ with $x=0$ can not be handled by the approach used in \cite{2017arXiv170208698J}.

The second aim of this paper is to improve the results of \cite{MR3451177} on the ergodicity of the JCIR process. For a general time-homogeneous Markov process $M=(M_{t})_{t\geqslant0}$ with state space $E$, let $\mathbf{P}^{t}(x,\cdot):=\mathbb{P}_{x}\left(M_{t}\in\cdot\right)$ denote the distribution of $M_{t}$
with the initial condition $M_0=x\in E$. Following \cite{MR1234295}, we call $M$ \emph{ergodic} if it admits a unique invariant probability measure $\pi$ such that
\[
\lim_{t\to\infty}\left\Vert\mathbf{P}^t(x,\cdot)-\pi\right\Vert_{TV}=0,\quad \forall x\in E,
\]
where $\Vert\cdot\Vert_{TV}$ denotes the total variation norm for signed measures. The Markov process $M$ is called \emph{exponentially ergodic} if it is ergodic and in addition there exists a finite-valued function $B$ on $E$ and a positive constant $\delta$ such that
\[
\left\Vert\mathbf{P}^t(x,\cdot)-\pi\right\Vert_{TV}\leqslant B(x)e^{-\delta t},\quad \forall x\in E,\ t>0.
\]
Our second main result is the following:

\begin{thm}\label{thm:ergodicity_of_y_x}
Consider the JCIR process $(X_{t})_{t\geqslant0}$ defined by \eqref{eq:SDE JCIR} with parameters $a,b,\sigma$ and $\nu$, where $\nu$ is the L\'evy measure of $(J_{t})_{t\geqslant0}$. Assume $a>0$. We have:
\begin{enumerate}
\item[(a)] If $\int_{\lbrace z>1\rbrace}\log z\nu(\mathrm{d}z)<\infty$, then $X$ is ergodic.
\item[(b)] If $\int_{\lbrace z>1\rbrace}z^\kappa\nu(\mathrm{d}z)<\infty$ for some $\kappa>0$, then $X$ is exponentially ergodic.
\end{enumerate}
\end{thm}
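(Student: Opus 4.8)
The plan is to verify the Foster--Lyapunov (drift) criteria for (exponential) ergodicity in the form developed by Meyn and Tweedie \cite{MR1234295}. The starting point is the extended generator of $X$, which on sufficiently smooth functions acts as
\[
\mathcal{L}f(x)=(a-bx)f'(x)+\tfrac{\sigma^{2}}{2}x f''(x)+\int_{0}^{\infty}\big(f(x+z)-f(x)\big)\,\nu(\mathrm{d}z),\qquad x\geqslant0,
\]
the jump integral being well defined near $0$ because of \eqref{eq:levy measure property}. Both parts follow once two ingredients are in place: first, the topological prerequisites on the Markov process $X$; and second, a norm-like Lyapunov function $V$. For (a) I seek $V$ with $\mathcal{L}V\leqslant-c$ off a compact set (Foster's criterion, yielding positive Harris recurrence and hence ergodicity), and for (b) a $V\geqslant1$ obeying the geometric drift condition $\mathcal{L}V\leqslant-cV+d\,\mathbbm{1}_{C}$ with $C$ petite, which gives $V$-uniform and hence exponential ergodicity.

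The first task---and the step I expect to be the main obstacle---is to show that $X$ is $\psi$-irreducible and that all compact subsets of the state space are petite (equivalently, that $X$ is a $T$-process); for a continuous-time process aperiodicity is then not an additional constraint. Irreducibility with respect to Lebesgue measure on $(0,\infty)$ should follow from $a>0$ together with the nondegeneracy of $\sigma\sqrt{x}$ on $(0,\infty)$: the positive drift at the boundary prevents absorption at $0$, the Brownian component lets the process reach every open subinterval with positive probability, and the subordinator only adds upward motion. Petiteness of compact sets is the delicate point, since $X$ need not be strong Feller. I would obtain it by splitting the driving noise as $J=J^{(1)}+J^{(2)}$, with $J^{(2)}$ collecting the finitely many jumps exceeding $1$; on the positive-probability event that $J^{(2)}$ has not yet jumped, $X$ is driven by a diffusion with bounded jumps, and the transition kernel can be minorised from below by a kernel inheriting a nontrivial continuous component from the diffusion part. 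Combined with the Feller property guaranteed by the well-posedness of \eqref{eq:SDE JCIR}, this yields the $T$-process property.

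For part (a) I would take $V(x)=\log(1+x)$. A direct computation gives
\[
\mathcal{L}V(x)=\frac{a-bx}{1+x}-\frac{\sigma^{2}}{2}\frac{x}{(1+x)^{2}}+\int_{0}^{\infty}\log\Big(1+\frac{z}{1+x}\Big)\,\nu(\mathrm{d}z).
\]
The first term tends to $-b$ and the second to $0$ as $x\to\infty$. For the jump term one bounds $\log(1+\tfrac{z}{1+x})$ by $z$ on $\{z\leqslant1\}$ and by $\log 2+\log z$ on $\{z>1\}$; the dominating function so obtained is $\nu$-integrable precisely because of \eqref{eq:levy measure property} together with the hypothesis $\int_{\{z>1\}}\log z\,\nu(\mathrm{d}z)<\infty$, so dominated convergence forces the jump term to vanish as $x\to\infty$. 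Hence $\mathcal{L}V(x)\to-b<0$, giving $\mathcal{L}V\leqslant-b/2$ outside a compact set $[0,R]$ while $\mathcal{L}V$ stays bounded on it, and Foster's criterion delivers positive Harris recurrence and ergodicity.

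For part (b), set $\kappa'=\min\{\kappa,1\}$ and note $\int_{\{z>1\}}z^{\kappa'}\nu(\mathrm{d}z)<\infty$. Taking $V(x)=1+x^{\kappa'}$ and using subadditivity, $(x+z)^{\kappa'}-x^{\kappa'}\leqslant z^{\kappa'}$ on $\{z>1\}$, together with the mean-value bound $(x+z)^{\kappa'}-x^{\kappa'}\leqslant \kappa' x^{\kappa'-1}z$ on $\{z\leqslant1\}$, one checks that the jump integral is bounded for large $x$, the diffusion contribution is nonpositive, and the dominant term of the drift is $-b\kappa'x^{\kappa'}$ (when $\kappa'=1$ this reduces to $\mathcal{L}V(x)=a-bx+\int_{0}^{\infty}z\,\nu(\mathrm{d}z)$, with the big-jump part of the integral finite since $z\leqslant z^{\kappa}$ for $z>1$). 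Consequently $\mathcal{L}V\leqslant-cV+d$ for suitable $c,d>0$, which rearranges to $\mathcal{L}V\leqslant-\tfrac{c}{2}V+d\,\mathbbm{1}_{C}$ on the petite set $C=\{V\leqslant 2d/c\}$. The Meyn--Tweedie criterion for $V$-uniform ergodicity then yields exponential ergodicity.
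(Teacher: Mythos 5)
Your overall strategy coincides with the paper's (Meyn--Tweedie drift criteria with $V(x)=\log(1+x)$ for (a) and $V(x)=x^{\kappa\wedge1}$ for (b), and the drift limits $\mathcal{L}V\to-b$, resp.\ $\mathcal{L}V\leqslant-b\kappa'V+\mathrm{const}$, are exactly the paper's), but three steps that you treat as routine are where the actual work lies, and one of them as written is false. First, irreducibility and petiteness: your remark that ``the subordinator only adds upward motion'' hides the main difficulty. Since all jumps are upward and the subordinator may have infinite activity, it is not obvious that $X$ can reach arbitrarily small levels; writing $\mu_{X_{t}^{x}}=\mu_{Y_{t}^{x}}\ast\mu_{Z_{t}}$ with $Z$ the solution of \eqref{eq:SDE Z_t}, one must show $\mu_{Z_{t}}([0,\delta])>0$ for every $\delta>0$, otherwise the density of $X_{t}^{x}$ vanishes on an interval $(0,\delta_{0})$ and Lebesgue-irreducibility fails. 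This is the content of Proposition \ref{prop:existence and positivity of the density function}, proved there by showing that the Laplace transform of $Z_{t}-\delta$ blows up as $u\to-\infty$; your proposed minorisation ``on the event that $J^{(2)}$ has not yet jumped'' still leaves you with an infinite-activity small-jump noise and does not resolve this. Second, to invoke the drift criteria you must verify that your unbounded $V$ lies in the domain of the \emph{extended} generator, i.e.\ that Dynkin's formula $\mathbb{E}_{x}[V(X_{t})]=V(x)+\mathbb{E}_{x}[\int_{0}^{t}\mathcal{A}V(X_{s})\,\mathrm{d}s]$ holds; this requires proving that the local-martingale terms in It\^o's formula are true martingales (the paper's Lemma \ref{lem:extended generator of the foster lyapunov function} and Step 1 of Proposition \ref{lem:foster_lyapunov_fct}, the latter already using the moment bound of Corollary \ref{prop:sup moments exist}). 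You nowhere address this.

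Third, in part (b) your choice $V(x)=1+x^{\kappa'}$ with $\kappa'=\kappa\wedge1<1$ is not $C^{2}$ at the boundary, and the drift inequality you claim fails there: for small $x$,
\[
(a-bx)V'(x)+\tfrac{\sigma^{2}}{2}xV''(x)=\kappa'x^{\kappa'-1}\Bigl(a-bx+\tfrac{\sigma^{2}}{2}(\kappa'-1)\Bigr),
\]
which tends to $+\infty$ as $x\to0^{+}$ whenever $a+\tfrac{\sigma^{2}}{2}(\kappa'-1)>0$. Then no finite $d$ gives $\mathcal{L}V\leqslant-cV+d$ on all of $\mathbb{R}_{\geqslant0}$, and It\^o's formula cannot be applied near $0$ in any case. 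The fix is the one the paper uses: take $V\in C^{2}(\mathbb{R}_{\geqslant0},\mathbb{R})$ nonnegative with $V(x)=x^{\kappa\wedge1}$ only for $x\geqslant1$, so that $V$ and $\mathcal{A}V$ are bounded on $[0,1]$ and the estimate $-b\kappa x^{\kappa}+c_{3}$ is only needed for $x\geqslant1$. With these three repairs your argument becomes essentially the paper's proof.
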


We remark that similar results on the ergodicity of Ornstein-Uhlenbeck type processes were derived by Masuda, see \cite[Theorem 2.6]{MR2287102}. It is also worth mentioning that Jin \textit{et al.} \cite{MR3451177} already found a sufficient condition for the exponential ergodicity of the JCIR process, namely, if $a>0$, $\int_{\lbrace z\leqslant1\rbrace}z\log(1/z)\nu(\mathrm{d}z)<\infty$ and $\int_{\lbrace z>1\rbrace}z\nu(\mathrm{d}z)<\infty$. It is seen from part (b) of our Theorem \ref{thm:ergodicity_of_y_x} that these conditions can be significantly relaxed.

Our method to prove the (exponential) ergodicity of the JCIR process in question is based on the general theory of Meyn and Tweedie \cite{MR1174380,MR1234295} for ergodicity of Markov processes. As the first step, using a decomposition of its characteristic function, we show existence of positive transition densities of the JCIR process (see Proposition \ref{prop:existence and positivity of the density function}), which improves a similar result in \cite{MR3451177}. In the second step, we construct some Foster-Lyapunov functions for the JCIR process which enable us to prove the asserted (exponential) ergodicity by using the results in \cite{MR1174380,MR1234294,MR1234295}. For the construction of the Foster-Lyapunov functions we will use some ideas from \cite{MR2287102}.\\

The remainder of the article is organized as follows. In Section \ref{sec:prelim} we first introduce some notation and recall some basic facts on the JCIR process, then we establish an estimate for the moments of Bessel distributed random variables, which is crucial to Theorem \ref{thm:fractional moments}. In Section \ref{sec:lower bound of the transition densities} we will show that the JCIR process possesses positive transition densities. In Section \ref{sec:moments} we will prove Theorem \ref{thm:fractional moments}. Sections \ref{sec:ergodicity of the jump-diffusion CIR process} and \ref{sec:foster lyapunov} are devoted to the proof of Theorem \ref{thm:ergodicity_of_y_x}.

\section{Preliminaries and notation}
\label{sec:prelim}
\subsection{Notation}

Let $\mathbb{N}$, $\mathbb{Z}_{\geqslant0}$, $\mathbb{R}$, $\mathbb{R}_{\geqslant0}$
and $\mathbb{R}_{>0}$ denote the sets of positive integers, non-negative
integers, real numbers, non-negative real numbers and strictly positive
real numbers, respectively. Let $\mathbb{C}$ be the set of complex
numbers. We define the following subset of $\mathbb{C}$:
\[
\mathcal{U}:=\left\lbrace u\in\mathbb{C}\thinspace:\thinspace\mathrm{Re}\thinspace u\leqslant0\right\rbrace .
\]
We denote the Borel $\sigma$-algebra on $\mathbb{R}_{\geqslant0}$
simply by $\mathcal{B}(\mathbb{R}_{\geqslant0})$.

By $C^{2}(\mathbb{R}_{\geqslant0},\mathbb{R})$, and $C_{c}^{2}(\mathbb{R}_{\geqslant0},\mathbb{R})$
we denote the sets of $\mathbb{R}$-valued functions on $\mathbb{R}_{\geqslant0}$
that are twice continuously differentiable, and that are twice continuously
differentiable with compact support, respectively. For $a,b\in\mathbb{R}$,
we denote by $a\wedge b$ and $a\vee b$ the minimum and maximum of
$a$ and $b$, respectively. \\

We assume that $(\Omega,\mathcal{F},\left(\mathcal{F}_{t}\right)_{t\geqslant0},\mathbb{P})$
is a filtered probability space satisfying the usual conditions, i.e.,
$(\Omega,\mathcal{F},\mathbb{P})$ is complete, the filtration $\left(\mathcal{F}_{t}\right)_{t\geqslant0}$
is right-continuous and $\mathcal{F}_{0}$ contains all $\mathbb{P}$-null
sets in $\mathcal{F}$.

\subsection{The JCIR process}

Let $(B_{t})_{t\geqslant0}$ be a standard $(\mathcal{F}_{t})_{t\geqslant0}$-Brownian
motion and $(J_{t})_{t\geqslant0}$ be a $1$-dimensional $(\mathcal{F}_{t})_{t\geqslant0}$-L\'evy
process whose characteristic function is given by
\[
\mathbb{E}\left[e^{uJ_{t}}\right]=\exp\left\lbrace t\int_{0}^{\infty}\left(e^{uz}-1\right)\nu(\mathrm{d}z)\right\rbrace ,\quad(t,u)\in\mathbb{R}_{\geqslant0}\times\mathcal{U},
\]
where $\nu$ satisfies \eqref{eq:levy measure property}. We assume
that $(B_{t})_{t\geqslant0}$ and $(J_{t})_{t\geqslant0}$ are independent.
The L\'evy-It\^o representation of $(J_{t})_{t\geqslant0}$ takes the
form
\begin{equation}
J_{t}=\int_{0}^{t}\int_{0}^{\infty}zN(\mathrm{d}s,\mathrm{d}z),\quad t\geqslant0,\label{eq:levy ito decomposition of J_t}
\end{equation}
where $N(\mathrm{d}t,\mathrm{d}z)=\sum_{s\leqslant t}\delta_{(s,\Delta J_{s})}(\mathrm{d}t,\mathrm{d}z)$
is a Poisson random measure on $\mathbb{R}_{\geqslant0}$, where $\Delta J_{s}:=J_{s}-J_{s-}$,
$s>0$, $\Delta J_{0}:=0$, and $\delta_{(s,x)}$ denotes the Dirac
measure concentrated at $(s,x)\in\mathbb{R}_{\geqslant0}^{2}$.\\

It follows from \cite[Theorem 5.1]{MR2584896} that if $X_{0}$ is
independent of $(B_{t})_{t\geqslant0}$ and $(J_{t})_{t\geqslant0}$,
then there is a unique strong solution $(X_{t})_{t\geqslant0}$
to the SDE \eqref{eq:SDE JCIR}. Since the diffusion coefficient
in the SDE \eqref{eq:SDE JCIR} is degenerate at zero and only positive
jumps are possible, the JCIR process $(X_{t})_{t\geqslant0}$ stays
non-negative if $X_{0}\geqslant0$. This fact can be shown rigorously
with the help of comparison theorems for SDEs, for more details we
refer to \cite{MR2584896}. Using It\^o's formula, it is easy to see
that
\[
X_{t}=e^{-bt}\left(X_{0}+a\int_{0}^{t}e^{bs}\mathrm{d}s+\sigma\int_{0}^{t}e^{bs}\sqrt{X_{s}}\mathrm{d}B_{s}+\int_{0}^{t}e^{bs}\mathrm{d}J_{s}\right),\quad t\geqslant0.
\]
Moreover, the JCIR process $(X_{t})_{t\geqslant0}$ is a \emph{regular
affine process}, and the infinitesimal generator $\mathcal{A}$
of $X$ is given by
\begin{equation}
\left(\mathcal{A}f\right)(x)=(a-bx)\frac{\partial f(x)}{\partial x}+\frac{1}{2}\sigma^{2}x\frac{\partial^{2}f(x)}{\partial x^{2}}+\int_{0}^{\infty}\left(f(x+z)-f(x)\right)\nu(\mathrm{d}z),\label{eq:infinitesimal generator of JCIR}
\end{equation}
where $x\in\mathbb{R}_{\geqslant0}$ and $f\in C_{c}^{2}(\mathbb{R}_{\geqslant0},\mathbb{R})$.
If we write
\begin{align*}
\left(\mathcal{D}f\right)(x) & =(a-bx)\frac{\partial f(x)}{\partial x}+\frac{1}{2}\sigma^{2}x\frac{\partial^{2}f(x)}{\partial x^{2}},\\
\left(\mathcal{J}f\right)(x) & =\int_{0}^{\infty}\left(f(x+z)-f(x)\right)\nu(\mathrm{d}z),
\end{align*}
where $x\in\mathbb{R}_{\geqslant0}$ and $f\in C_{c}^{2}(\mathbb{R}_{\geqslant0},\mathbb{R})$,
we see that $\mathcal{A}f=\mathcal{D}f+\mathcal{J}f$.\\

\begin{rem} Let $a,b\in\mathbb{R}_{>0}$. If $\int_{\lbrace z>1\rbrace}\log z\nu(\mathrm{d}z)<\infty$,
then it follows from \cite[Theorem 3.16]{MR2390186} that the JCIR
process converges in law to a limit distribution $\pi$. Moreover,
as shown in \cite[p.80]{MR2779872}, the limit distribution $\pi$
is also the unique invariant distribution of the JCIR process. \end{rem}

Finally, we introduce some notation. Note that the strong solution
$(X_{t})_{t\geqslant0}$ of the SDE \eqref{eq:SDE JCIR} obviously
depends on its initial value $X_{0}$. From now on, we denote by $(X_{t}^{x})_{t\geqslant0}$
the JCIR process starting from a constant initial value $x\in\mathbb{R}_{\geqslant0}$,
i.e., $(X_{t}^{x})_{t\geqslant0}$ satisfies
\begin{equation}
\mathrm{d}X_{t}^{x}=(a-bX_{t}^{x})\mathrm{d}t+\sigma\sqrt{X_{t}^{x}}\mathrm{d}B_{t}+\mathrm{d}J_{t},\quad t\geqslant0,\quad X_{0}^{x}=x\in\mathbb{R}_{\geqslant0}.\label{eq:X^x_t}
\end{equation}

\subsection{Bessel distribution}
\label{subsec:bessel distr}
Suppose $\alpha$ and $\beta$ are positive constants. We call a probability
measure $m_{\alpha,\beta}$ on $(\mathbb{R}_{\geqslant0},\mathcal{B}(\mathbb{R}_{\geqslant0})$
a Bessel distribution with parameters $\alpha$ and $\beta$ if
\begin{equation}
m_{\alpha,\beta}(\mathrm{d}x):=e^{-\alpha}\delta_{0}(\mathrm{d}x)+\beta e^{-\alpha-\beta x}\sqrt{\alpha(\beta x)^{-1}}I_{1}\left(2\sqrt{\alpha\beta x}\right)\mathrm{d}x,\quad x\in\mathbb{R}_{\geqslant0},\label{eq: defi, m_a,b}
\end{equation}
where $\delta_{0}$ denotes the Dirac measure at the origin and $I_{1}$
is the modified Bessel function of the first kind, namely,
\begin{equation}
I_{1}(r)=\frac{r}{2}\sum_{k=0}^{\infty}\frac{\left(\frac{1}{4}r^{2}\right)^{k}}{k!(k+1)!},\quad r\in\mathbb{R}.\label{defi, bessel function}
\end{equation}
Let $\widehat{m}_{\alpha,\beta}(u):=\int_{\mathbb{R}_{\geqslant0}}\exp\lbrace ux\rbrace m_{\alpha,\beta}(\mathrm{d}x)$
for $u\in\mathcal{U}$ denote the characteristic function of the Bessel
distribution $m_{\alpha,\beta}$. It follows from \cite[p.291]{MR3451177}
that
\[
\widehat{m}_{\alpha,\beta}(u)=\exp\left\lbrace \frac{\alpha u}{\beta-u}\right\rbrace ,\quad u\in\mathcal{U}.
\]

To study the moments of the JCIR process, the lemma below plays a
substantial role.

\begin{lem}\label{lem:bound of bessel distribution} Let $\kappa>0$
and $\delta>0$ be positive constants. Then
\begin{enumerate}
\item[(i)] there exists a positive constant $C_{1}=C_{1}(\kappa)$ such that
for all $\alpha>0$ and $\beta>0$,
\[
\int_{\mathbb{R}_{\geqslant0}}x^{\kappa}m_{\alpha,\beta}(\mathrm{d}x)\leqslant C_{1}\frac{1+\alpha^{\kappa}}{\beta^{\kappa}}.
\]
\item[(ii)] there exists a positive constant $C_{2}=C_{2}(\kappa,\delta)$
such that for all $\alpha\geqslant\delta$ and $\beta>0$,
\[
\int_{\mathbb{R}_{\geqslant0}}x^{\kappa}m_{\alpha,\beta}(\mathrm{d}x)\geqslant C_{2}\frac{\alpha^{\kappa}}{\beta^{\kappa}}.
\]
\end{enumerate}
\end{lem}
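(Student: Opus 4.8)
The plan is to first recognize the Bessel distribution $m_{\alpha,\beta}$ as the law of a compound Poisson random variable, which turns both estimates into transparent statements about moments of a random sum of exponentials. Starting from the characteristic function recorded above, I would write
\[
\widehat{m}_{\alpha,\beta}(u)=\exp\left\{\frac{\alpha u}{\beta-u}\right\}=\exp\left\{\alpha\left(\frac{\beta}{\beta-u}-1\right)\right\},\qquad u\in\mathcal{U},
\]
and observe that $\frac{\beta}{\beta-u}=\mathbb{E}[e^{uE}]$ for $E\sim\mathrm{Exp}(\beta)$. Hence $m_{\alpha,\beta}$ is the distribution of $S=\sum_{i=1}^{N}E_i$, where $N\sim\mathrm{Poisson}(\alpha)$ and $(E_i)$ are i.i.d.\ $\mathrm{Exp}(\beta)$ independent of $N$; the atom $e^{-\alpha}\delta_0$ is exactly $\mathbb{P}(N=0)$. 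Writing $E_i=\tilde E_i/\beta$ with $\tilde E_i\sim\mathrm{Exp}(1)$ gives $\int_{\mathbb{R}_{\geqslant0}}x^{\kappa}m_{\alpha,\beta}(\mathrm{d}x)=\beta^{-\kappa}\,\mathbb{E}[\tilde S^{\kappa}]$ with $\tilde S=\sum_{i=1}^{N}\tilde E_i$. This isolates the exact factor $\beta^{-\kappa}$ appearing in both bounds and reduces the problem to estimating $h(\alpha):=\mathbb{E}[\tilde S^{\kappa}]$.

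For the upper bound (i), I would fix the integer $m=\lceil\kappa\rceil\geq\kappa$ and apply Lyapunov's inequality $\mathbb{E}[\tilde S^{\kappa}]\leq\mathbb{E}[\tilde S^{m}]^{\kappa/m}$. Since the cumulants of $\tilde S$ are $\alpha\,r!$ (each nonnegative and linear in $\alpha$), the moment--cumulant formula shows that $\mathbb{E}[\tilde S^{m}]$ is a polynomial in $\alpha$ of degree $m$ with nonnegative coefficients and no constant term; in particular $\mathbb{E}[\tilde S^{m}]\leq C_m\max(\alpha,\alpha^{m})$ for all $\alpha>0$. Substituting this into Lyapunov's inequality and separating $\alpha\leq1$ from $\alpha\geq1$ yields $h(\alpha)\leq C_1(1+\alpha^{\kappa})$ with $C_1=C_1(\kappa)$. (For $0<\kappa\leq1$ one may bypass this entirely: by concavity $h(\alpha)\leq(\mathbb{E}\tilde S)^{\kappa}=\alpha^{\kappa}$.) Restoring the factor $\beta^{-\kappa}$ gives part (i).

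For the lower bound (ii), I would use concentration of $\tilde S$ around its mean. Here $\mathbb{E}\tilde S=\alpha$ and $\mathrm{Var}\,\tilde S=2\alpha$, so Chebyshev's inequality gives $\mathbb{P}(\tilde S\geq\alpha/2)\geq1/2$ once $\alpha\geq16$, whence $h(\alpha)\geq(\alpha/2)^{\kappa}\,\mathbb{P}(\tilde S\geq\alpha/2)\geq2^{-\kappa-1}\alpha^{\kappa}$ on $[16,\infty)$. On the compact interval $[\delta,16]$ (relevant only when $\delta<16$) the function $\alpha\mapsto h(\alpha)/\alpha^{\kappa}$ is continuous and strictly positive, hence attains a positive minimum $c=c(\kappa,\delta)$, so $h(\alpha)\geq c\,\alpha^{\kappa}$ there. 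Taking $C_2=\min(2^{-\kappa-1},c)$ and restoring $\beta^{-\kappa}$ establishes (ii).

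The main obstacle is matching the exact exponent $\alpha^{\kappa}$ at \emph{both} ends rather than an integer power: a crude bound such as $x^{\kappa}\leq1+x^{\lceil\kappa\rceil}$ overshoots to $\alpha^{\lceil\kappa\rceil}$ for large $\alpha$, so the non-integer nature of $\kappa$ must be absorbed by Lyapunov's inequality (upper bound) and by second-moment concentration (lower bound), with the small-$\alpha$ regime handled separately by continuity and positivity. The one genuinely analytic input to verify is that $h$ is continuous and everywhere positive on $(0,\infty)$; this is immediate from the series representation $h(\alpha)=e^{-\alpha}\sum_{n\geq1}\frac{\alpha^{n}}{n!}\frac{\Gamma(n+\kappa)}{\Gamma(n)}$ obtained by conditioning on $N$, which has positive coefficients and infinite radius of convergence.
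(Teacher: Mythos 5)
Your argument is correct, but it takes a genuinely different route from the paper's. Your key move --- reading $\widehat{m}_{\alpha,\beta}(u)=\exp\lbrace\alpha(\tfrac{\beta}{\beta-u}-1)\rbrace$ as the characteristic function of a compound Poisson sum $S=\sum_{i=1}^{N}E_{i}$ with $N\sim\mathrm{Poisson}(\alpha)$ and $E_{i}\sim\mathrm{Exp}(\beta)$ --- does not appear in the paper; it extracts the factor $\beta^{-\kappa}$ exactly by scaling and reduces both bounds to the one-parameter function $h(\alpha)=\mathbb{E}[\tilde{S}^{\kappa}]$, after which the upper bound follows from Lyapunov's inequality combined with the moment--cumulant formula (cumulants $\alpha\,r!$, hence $\mathbb{E}[\tilde{S}^{m}]\leqslant C_{m}\max(\alpha,\alpha^{m})$), and the lower bound from Chebyshev concentration around the mean $\alpha$ using $\mathrm{Var}(\tilde{S})=2\alpha$. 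The paper instead works directly with the Bessel density: for the upper bound it uses Jensen when $0<\kappa\leqslant1$, the explicit series $\int x^{n}m_{\alpha,\beta}(\mathrm{d}x)=e^{-\alpha}\beta^{-n}\sum_{k\geqslant0}\alpha^{k+1}(n+k)!/(k!(k+1)!)$ for integer moments, and Cauchy--Schwarz interpolation for non-integer $\kappa>1$; for the lower bound with $0<\kappa<1$ it invokes the fractional-moment identity $\mathbb{E}[\eta^{\kappa}]=\Gamma(1-\kappa)^{-1}\int_{0}^{\infty}\bigl(-\partial_{u}\mathbb{E}[e^{-u\eta}]\bigr)u^{-\kappa}\,\mathrm{d}u$ applied to the Laplace transform, followed by a change of variables and Fatou's lemma. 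Both proofs share the same skeleton in the delicate lower-bound regime --- an asymptotic estimate as $\alpha\to\infty$ plus continuity and strict positivity on a compact $\alpha$-interval --- but your large-$\alpha$ input is second-moment concentration whereas the paper's is Fatou applied to an explicit integral representation. Your version is more probabilistic and avoids both the Bessel-series manipulation and the fractional-moment Laplace identity; the paper's yields an exact closed-form expression for $\int x^{\kappa}m_{\alpha,\beta}(\mathrm{d}x)$ when $0<\kappa<1$, which has independent interest. In a written version you should make explicit the (routine) verification that the compound Poisson law coincides with $m_{\alpha,\beta}$, i.e.\ that the atom $e^{-\alpha}\delta_{0}$ and the Poisson mixture of $\mathrm{Gamma}(n,\beta)$ densities reproduce the defining formula for $m_{\alpha,\beta}$.
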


\begin{proof} (i) If $0<\kappa\leqslant1$, then we can use Jensen's
inequality to obtain
\begin{equation}
\int_{\mathbb{R}_{\geqslant0}}x^{\kappa}m_{\alpha,\beta}(\mathrm{d}x)\leqslant\left(\int_{\mathbb{R}_{\geqslant0}}xm_{\alpha,\beta}(\mathrm{d}x)\right)^{\kappa}=\left(\frac{\alpha}{\beta}\right)^{\kappa},\label{eq1: Bessel proof}
\end{equation}
where the last identity holds because of
\[
\int_{\mathbb{R}_{\geqslant0}}xm_{\alpha,\beta}(\mathrm{d}x)=\left.\frac{\partial}{\partial u}\widehat{m}_{\alpha,\beta}(u)\right\vert _{u=0}=\frac{\alpha}{\beta}.
\]
For $\kappa=n\in\mathbb{N}$ with $n\geqslant2$, by \eqref{eq: defi, m_a,b}
and \eqref{defi, bessel function}, we have for all $\alpha,\beta>0$,
\begin{align}
\int_{\mathbb{R}_{\geqslant0}}x^{n}m_{\alpha,\beta}(\mathrm{d}x) & =\int_{\mathbb{R}_{\geqslant0}}x^{n}\left(e^{-\alpha}\delta_{0}(\mathrm{d}x)+\beta e^{-\alpha-\beta x}\sqrt{\alpha(\beta x)^{-1}}I_{1}\left(2\sqrt{\alpha\beta x}\right)\mathrm{d}x\right)\nonumber \\
 & =e^{-\alpha}\sum_{k=0}^{\infty}\frac{(\alpha\beta)^{k+1}}{k!(k+1)!}\int_{0}^{\infty}x^{n+k}e^{-\beta x}\mathrm{d}x\nonumber \\
 & =\frac{e^{-\alpha}}{\beta^{n}}\sum_{k=0}^{\infty}\frac{\alpha^{k+1}(n+k)!}{k!(k+1)!}\nonumber \\
 & =\frac{e^{-\alpha}}{\beta^{n}}\sum_{k=0}^{n-2}\frac{\alpha^{k+1}(n+k)!}{k!(k+1)!}\nonumber \\
 & \quad+\frac{e^{-\alpha}\alpha^{n}}{\beta^{n}}\sum_{k=n-1}^{\infty}\frac{\alpha^{k+1-n}}{(k+1-n)!}\cdot\frac{(k+1)\cdots(k+n)}{(k+2-n)\cdots(k+1)}.\label{eq2: Bessel proof}
\end{align}
Since
\[
\lim_{k\to\infty}\frac{(k+1)\cdots(k+n)}{(k+2-n)\cdots(k+1)}=1,
\]
it follows from \eqref{eq2: Bessel proof} that
\begin{align}
\int_{\mathbb{R}_{\geqslant0}}x^{n}m_{\alpha,\beta}(\mathrm{d}x) & \leqslant c_{1}\frac{e^{-\alpha}}{\beta^{n}}\left(\alpha+\alpha^{2}+\cdots+\alpha^{n-1}+\alpha^{n}\sum_{m=0}^{\infty}\frac{\alpha^{m}}{m!}\right)\nonumber \\
 & \leqslant c_{2}\left(\frac{1}{\beta^{n}}+\frac{\alpha^{n}}{\beta^{n}}\right),\quad\text{for all }\alpha,\beta>0,\label{eq3: Bessel proof}
\end{align}
where $c_{1}$ and $c_{2}$ are positive constants depending on $n$.

For the remaining possible $\kappa$, namely, $\kappa>1$ and $\kappa\notin\mathbb{N}$,
we can find $n\in\mathbb{N}$ and $\varepsilon\in(0,1]$ such that
$2\kappa=n+\varepsilon$. By \eqref{eq1: Bessel proof}, \eqref{eq3: Bessel proof}
and H\"older's inequality, we get for all $\alpha,\beta>0$,
\begin{align*}
\int_{\mathbb{R}_{\geqslant0}}x^{\kappa}m_{\alpha,\beta}(\mathrm{d}x) & \leqslant\left(\int_{\mathbb{R}_{\geqslant0}}x^{n}m_{\alpha,\beta}(\mathrm{d}x)\right)^{\frac{1}{2}}\left(\int_{\mathbb{R}_{\geqslant0}}x^{\varepsilon}m_{\alpha,\beta}(\mathrm{d}x)\right)^{\frac{1}{2}}\\
 & \leqslant c_{3}\left(\frac{1+\alpha^{n}}{\beta^{n}}\right)^{\frac{1}{2}}\left(\frac{\alpha}{\beta}\right)^{\frac{\varepsilon}{2}}\leqslant c_{4}\frac{\alpha^{\varepsilon/2}+\alpha^{(n+\varepsilon)/2}}{\beta^{(n+\varepsilon)/2}}\leqslant c_{5}\frac{1+\alpha^{\kappa}}{\beta^{\kappa}},
\end{align*}
where $c_{3}$, $c_{4}$ and $c_{5}$ are positive constants depending
on $\kappa$.

(ii) If $\kappa\geqslant1$, using again Jensen's inequality, we obtain
for all $\alpha,\beta>0$,
\[
\int_{\mathbb{R}_{\geqslant0}}x^{\kappa}m_{\alpha,\beta}(\mathrm{d}x)\geqslant\left(\int_{\mathbb{R}_{\geqslant0}}xm_{\alpha,\beta}(\mathrm{d}x)\right)^{\kappa}=\left(\frac{\alpha}{\beta}\right)^{\kappa}.
\]
Suppose now $0<\kappa<1$ and let $\theta:=1-\kappa\in(0,1)$. Consider
a random variable $\eta>0$ such that
\begin{equation}
\eta\sim\left(1-e^{-\alpha}\right)^{-1}\left(m_{\alpha,\beta}(\mathrm{d}x)-e^{-\alpha}\delta_{0}(\mathrm{d}x)\right).\label{eq4: Bessel proof}
\end{equation}
Then for $u\geqslant0$, we have
\begin{align*}
\mathbb{E}\left[e^{-u\eta}\right]
 & =\left(1-e^{-\alpha}\right)^{-1}\left(\widehat{m}_{\alpha,\beta}(-u)-e^{-\alpha}\right)\\
 & =\left(1-e^{-\alpha}\right)^{-1}\left(\exp\left\lbrace \frac{-\alpha u}{\beta+u}\right\rbrace -\exp\left\lbrace -\alpha\right\rbrace \right).
\end{align*}

Since, by the Fubini's theorem,
\begin{align*}
\int_{0}^{\infty}\frac{\partial}{\partial u}\mathbb{E}\left[e^{-u\eta}\right]u^{\theta-1}\mathrm{d}u & =-\int_{0}^{\infty}\mathbb{E}\left[Ye^{-u\eta}\right]u^{\theta-1}\mathrm{d}u\\
 & =-\mathbb{E}\left[\int_{0}^{\infty}\eta e^{-u\eta}u^{\theta-1}\mathrm{d}u\right]=-\mathbb{E}\left[\Gamma(\theta)\eta^{1-\theta}\right],
\end{align*}
it follows that
\begin{align}
\mathbb{E}\left[\eta^{\kappa}\right] & =\frac{-1}{\Gamma(\theta)}\int_{0}^{\infty}\frac{\partial}{\partial u}\mathbb{E}\left[e^{-u\eta}\right]u^{\theta-1}\mathrm{d}u\nonumber \\
 & =\frac{\alpha\beta}{\Gamma(\theta)\left(1-e^{-\alpha}\right)}\int_{0}^{\infty}\exp\left\lbrace \frac{-\alpha u}{\beta+u}\right\rbrace \frac{u^{\theta-1}}{(\beta+u)^{2}}\mathrm{d}u.\label{eq5: Bessel proof}
\end{align}
By \eqref{eq4: Bessel proof} and \eqref{eq5: Bessel proof}, we see
that
\[
\int_{\mathbb{R}_{\geqslant0}}x^{\kappa}m_{\alpha,\beta}(\mathrm{d}x)=\frac{\alpha\beta}{\Gamma(\theta)}\int_{0}^{\infty}\exp\left\lbrace \frac{-\alpha u}{\beta+u}\right\rbrace \frac{u^{\theta-1}}{(\beta+u)^{2}}\mathrm{d}u,\quad u\in\mathbb{R}_{\geqslant0}.
\]
By a change of variables $w:=\alpha u/\beta$, we get
\begin{align}
\int_{\mathbb{R}_{\geqslant0}}x^{\kappa}m_{\alpha,\beta}(\mathrm{d}x) & =\frac{\alpha\beta}{\Gamma(\theta)}\int_{0}^{\infty}\exp\left\lbrace -\alpha+\frac{\alpha\beta}{\beta+\frac{\beta w}{\alpha}}\right\rbrace \frac{\left(\frac{\beta w}{\alpha}\right)^{-\kappa}}{\left(\beta+\frac{\beta w}{\alpha}\right)^{2}}\frac{\beta}{\alpha}\mathrm{d}w\nonumber \\
 & =\frac{1}{\Gamma(\theta)}\left(\frac{\alpha}{\beta}\right)^{\kappa}\int_{0}^{\infty}\exp\left\lbrace \frac{-\alpha w}{\alpha+w}\right\rbrace \frac{w^{-\kappa}}{\left(1+w/\alpha\right)^{2}}\mathrm{d}w\nonumber \\
 & =:\frac{1}{\Gamma(\theta)}\left(\frac{\alpha}{\beta}\right)^{\kappa}I(\alpha).\label{eq6: Bessel proof}
\end{align}
By Fatou's lemma,
\begin{align*}
\liminf_{\alpha\to\infty}I(\alpha) & \geqslant\int_{0}^{\infty}\liminf_{\alpha\to\infty}\exp\left\lbrace \frac{-\alpha w}{\alpha+w}\right\rbrace \frac{w^{-\kappa}}{\left(1+w/\alpha\right)^{2}}\mathrm{d}w\\
 & =\int_{0}^{\infty}\exp\left\lbrace -w\right\rbrace w^{-\kappa}\mathrm{d}w=\Gamma(1-\kappa)>0.
\end{align*}
On the other hand, the function $(0,\infty)\ni\alpha\mapsto I(\alpha)$
is positive and continuous. So we can find a positive constant $c_{6}$
depending on $\kappa$ and $\delta$ such that $I(\alpha)\ge c_{6}$
for all $\alpha\in[\delta,\infty)$, which, together with \eqref{eq6: Bessel proof},
implies the assertion.
\end{proof}

\section{Positivity of the transition densities of the JCIR process}
\label{sec:lower bound of the transition densities}

The aim of this section is to prove that the JCIR process $X$ has
positive transition densities. Our approach is similar to that in
\cite[Proposition 4.5]{2016arXiv160706254J} and is based on the representation
of the law of $X_{t}^{x}$ as the convolution of two probability measures,
one of which is the distribution of the normal CIR process. Before
we prove the positivity of the transition densities, we recall the
characteristic function of $X_{t}^{x}$ and a decomposition of it,
established in \cite{MR3451177}.\\

Recall that $(X_{t}^{x})_{t\geqslant0}$ is given in \eqref{eq:X^x_t}.
Assume $a\in\mathbb{R}_{\geqslant0}$ and $b,\sigma\in\mathbb{R}_{>0}$.
Following \cite{MR3451177}, the characteristic function  of $X_{t}^{x}$
has the form
\begin{align}
\mathbb{E}\left[e^{uX_{t}^{x}}\right] & =\left(1-\tfrac{\sigma^{2}u}{2b}\left(1-e^{-bt}\right)\right)^{-\tfrac{2a}{\sigma^{2}}}\cdot\exp\left\lbrace x\psi(t,u)\right\rbrace \label{eq:characteristic function of the JCIR}\\
 & \quad\quad\quad\cdot\exp\left\lbrace \int_{0}^{t}\int_{0}^{\infty}\left(e^{z\psi(s,u)}-1\right)\nu(\mathrm{d}z)\mathrm{d}s\right\rbrace ,\quad(t,u)\in\mathbb{R}_{\geqslant0}\times\mathcal{U},\nonumber
\end{align}
where the function $\psi(t,u)$ is given by
\begin{equation}
\psi(t,u)=\frac{ue^{-bt}}{1-\tfrac{\sigma^{2}u}{2b}\left(1-e^{-bt}\right)}.\label{eq:psi}
\end{equation}

As mentioned in \cite{MR3451177}, the product of the first two terms
on the right-hand side of \eqref{eq:characteristic function of the JCIR}
is the characteristic function  of the CIR process. More precisely, consider
the unique strong solution $(Y_{t}^{x})_{t\geqslant0}$ of the following
stochastic differential equation \eqref{eq:SDE JCIR}
\begin{equation}
\mathrm{d}Y_{t}^{x}=(a-bY_{t}^{x})\mathrm{d}t+\sqrt{Y_{t}^{x}}\mathrm{d}B_{t},\quad t\geqslant0,\quad Y_{0}^{x}=x\in\mathbb{R}_{\geqslant0}\text{ a.s.}.\label{eq:SDE CIR}
\end{equation}
where $a\in\mathbb{R}_{\geqslant0}$, and $b,\sigma\in\mathbb{R}_{>0}$.
So $(Y_{t}^{x})_{t\geqslant0}$ is the CIR process starting from $x$.
Note that \eqref{eq:SDE CIR} is a special case of \eqref{eq:X^x_t}
with $J_{t}\equiv0$ (corresponding to $\nu=0$). By \eqref{eq:characteristic function of the JCIR},
we obtain
\begin{equation}
\mathbb{E}\left[e^{uY_{t}^{x}}\right]=\left(1-\tfrac{\sigma^{2}u}{2b}\left(1-e^{-bt}\right)\right)^{-\tfrac{2a}{\sigma^{2}}}\exp\left\lbrace \tfrac{xue^{-bt}}{1-\tfrac{\sigma^{2}u}{2b}\left(1-e^{-bt}\right)}\right\rbrace \label{eq:characteristic function of the CIR}
\end{equation}
for all $t\geqslant0$ and $u\in\mathcal{U}$.\\

We now turn to the third term on the right-hand side of \eqref{eq:characteristic function of the JCIR}.
Let $Z:=(Z_{t})_{t\geqslant0}$ be the unique strong solution of the
stochastic differential equation
\begin{equation}
\mathrm{d}Z_{t}=-bZ_{t}\mathrm{d}t+\sigma\sqrt{Z_{t}}\mathrm{d}B_{t}+\mathrm{d}J_{t},\quad t\geqslant0,\quad Z_{0}=0\text{ a.s.},\label{eq:SDE Z_t}
\end{equation}
where $\sigma\in\mathbb{R}_{>0}$. It is easy to see that \eqref{eq:SDE Z_t}
is also a special case of \eqref{eq:X^x_t} with $a=x=0$. Again by
\eqref{eq:characteristic function of the JCIR}, we have
\begin{equation}
\mathbb{E}\left[e^{uZ_{t}}\right]=\exp\left\lbrace \int_{0}^{t}\int_{0}^{\infty}\left(e^{z\psi(s,u)}-1\right)\nu(\mathrm{d}z)\mathrm{d}s\right\rbrace ,\quad(t,u)\in\mathbb{R}_{\geqslant0}\times\mathcal{U}.\label{eq:characteristic function of Z_t}
\end{equation}

It follows from \eqref{eq:characteristic function of the JCIR}, \eqref{eq:characteristic function of the CIR}
and \eqref{eq:characteristic function of Z_t} that
\[
\mathbb{E}\left[e^{uX_{t}^{x}}\right]=\mathbb{E}\left[e^{uY_{t}^{x}}\right]\mathbb{E}\left[e^{uZ_{t}}\right]
\]
for all $t\geqslant0$ and $u\in\mathcal{U}$. Let $\mu_{Y_{t}^{x}}$
and $\mu_{Z_{t}}$ be the probability laws of $Y_{t}^{x}$ and $Z_{t}$
induced on $(\mathbb{R}_{\geqslant0},\mathcal{B}(\mathbb{R}_{\geqslant0}))$,
respectively. Then the probability law $\mu_{X_{t}^{x}}$ of $X_{t}^{x}$
is given by
\begin{equation}
\mu_{X_{t}^{x}}=\mu_{Y_{t}^{y}}\ast\mu_{Z_{t}}, \label{eq: deco mu_X_t}
\end{equation}
where $\ast$ denotes the convolution of two measures.\\

\begin{prop}\label{prop:existence and positivity of the density function}
Assume $a>0$. For each $x\in\mathbb{R}_{\geqslant0}$ and $t\in\mathbb{R}_{>0}$,
the random variable $X_{t}^{x}$ possesses a density function $f_{X_{t}^{x}}(y)$,
$y\geqslant0$ with respect to the Lebesgue measure. Moreover, the
density function $f_{X_{t}^{x}}(y)$ is strictly positive for all
$y\in\mathbb{R}_{>0}$. \end{prop}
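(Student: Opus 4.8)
The plan is to exploit the convolution decomposition \eqref{eq: deco mu_X_t}, namely $\mu_{X_t^x}=\mu_{Y_t^x}\ast\mu_{Z_t}$, and to reduce the statement to two facts: that the CIR marginal $Y_t^x$ already possesses a density that is strictly positive on $(0,\infty)$, and that the law of $Z_t$ charges every neighbourhood of the origin. First I would identify $\mu_{Y_t^x}$ explicitly. Writing $\beta_t:=\tfrac{2b}{\sigma^2(1-e^{-bt})}$ and $\alpha_{t,x}:=x\beta_t e^{-bt}$, a direct comparison of \eqref{eq:characteristic function of the CIR} with the Bessel characteristic function $\widehat{m}_{\alpha,\beta}(u)=\exp\lbrace\alpha u/(\beta-u)\rbrace$ recalled above shows that
\[
\mathbb{E}\left[e^{uY_t^x}\right]=\left(1-\tfrac{u}{\beta_t}\right)^{-2a/\sigma^2}\cdot\exp\left\lbrace\frac{\alpha_{t,x}u}{\beta_t-u}\right\rbrace,\quad u\in\mathcal{U},
\]
so that $\mu_{Y_t^x}$ is the convolution of a Gamma law with shape $\rho:=2a/\sigma^2>0$ and rate $\beta_t$ with the Bessel law $m_{\alpha_{t,x},\beta_t}$ from \eqref{eq: defi, m_a,b}. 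Since $a>0$, the Gamma law has the continuous, strictly positive density $g_t(y)=\tfrac{\beta_t^{\rho}}{\Gamma(\rho)}y^{\rho-1}e^{-\beta_t y}$ on $(0,\infty)$, and because $m_{\alpha_{t,x},\beta_t}$ carries the atom $e^{-\alpha_{t,x}}\delta_0$ at the origin, $Y_t^x$ has a density $f_{Y_t^x}$ with $f_{Y_t^x}(y)\geqslant e^{-\alpha_{t,x}}g_t(y)>0$ for every $y>0$.

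Since $\mu_{Y_t^x}$ is absolutely continuous, so is the convolution $\mu_{X_t^x}=\mu_{Y_t^x}\ast\mu_{Z_t}$, and its density is
\[
f_{X_t^x}(y)=\int_{[0,y)}f_{Y_t^x}(y-s)\,\mu_{Z_t}(\mathrm{d}s),\quad y\geqslant0,
\]
where I use that $Y_t^x,Z_t\geqslant0$. This already yields the existence of the density. For the strict positivity, fix $y>0$. The integrand is strictly positive for every $s\in[0,y)$, because $y-s\in(0,y]$ and $f_{Y_t^x}>0$ on $(0,\infty)$; hence $f_{X_t^x}(y)>0$ as soon as $\mu_{Z_t}([0,y))>0$. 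Thus the whole proposition reduces to the claim that $\mu_{Z_t}$ assigns positive mass to every interval $[0,\varepsilon)$, $\varepsilon>0$, i.e.\ that $0$ is the infimum of the support of $Z_t$.

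I expect this last claim to be the main obstacle, precisely because for an infinite L\'evy measure one cannot in general hope for an atom of $\mu_{Z_t}$ at $0$: evaluating $\lim_{u\to-\infty}\mathbb{E}[e^{uZ_t}]$ from \eqref{eq:characteristic function of Z_t} shows that $\mu_{Z_t}(\lbrace0\rbrace)>0$ holds only under an extra condition such as $\int_{\lbrace z\leqslant1\rbrace}z\log(1/z)\,\nu(\mathrm{d}z)<\infty$. I would instead argue by truncating the big jumps. For $\delta>0$ split the driving noise via the L\'evy--It\^o decomposition as $J=J_{\leqslant\delta}+J_{>\delta}$ into its independent small- and large-jump parts, where $J_{>\delta}$ is compound Poisson with intensity $\lambda_\delta:=\nu(\lbrace z>\delta\rbrace)<\infty$, and let $Z^{(\delta)}$ solve \eqref{eq:SDE Z_t} with $J$ replaced by $J_{\leqslant\delta}$. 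On the event $E_\delta$ that $J_{>\delta}$ has no jump in $[0,t]$, which has probability $e^{-t\lambda_\delta}>0$ and is independent of $Z^{(\delta)}$, pathwise uniqueness gives $Z_t=Z^{(\delta)}_t$, so that $\mathbb{P}(Z_t<\varepsilon)\geqslant\mathbb{P}(Z^{(\delta)}_t<\varepsilon)\,e^{-t\lambda_\delta}$. Since $J_{\leqslant\delta}$ has bounded jumps and finite mean $t\int_0^\delta z\,\nu(\mathrm{d}z)$ (finite by \eqref{eq:levy measure property}), a standard first-moment computation for \eqref{eq:SDE Z_t} yields $\mathbb{E}[Z^{(\delta)}_t]=\tfrac{1-e^{-bt}}{b}\int_0^\delta z\,\nu(\mathrm{d}z)\to0$ as $\delta\to0$. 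By Markov's inequality $\mathbb{P}(Z^{(\delta)}_t<\varepsilon)\geqslant1-\mathbb{E}[Z^{(\delta)}_t]/\varepsilon$ is therefore strictly positive for $\delta$ small enough, whence $\mathbb{P}(Z_t<\varepsilon)>0$ for every $\varepsilon>0$, as required. The delicate points to verify are the independence and the coupling on $E_\delta$ (via independence of the jump parts over disjoint size-regions and pathwise uniqueness for \eqref{eq:SDE Z_t}) and the justification that the Brownian and compensated small-jump parts contribute no mean, so that the elementary ODE for $\mathbb{E}[Z^{(\delta)}_t]$ is valid.
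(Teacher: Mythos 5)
Your argument is correct, and while it starts from the same convolution decomposition $\mu_{X_t^x}=\mu_{Y_t^x}\ast\mu_{Z_t}$ and the same reduction (positivity of $f_{Y_t^x}$ on $(0,\infty)$ plus $\mu_{Z_t}([0,\varepsilon))>0$ for all $\varepsilon>0$), it diverges from the paper at both sub-steps. For the CIR factor, the paper simply cites the classical noncentral-chi-square density formulas, whereas you rederive positivity from the factorization of \eqref{eq:characteristic function of the CIR} as $\mathrm{Gamma}(2a/\sigma^{2},\beta_t)\ast m_{\alpha_{t,x},\beta_t}$ and the atom of the Bessel law at $0$; this is a nice self-contained alternative (note only that for $x=0$ the Bessel factor degenerates to $\delta_0$, which is harmless). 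For the key claim that $Z_t$ charges every neighbourhood of the origin, the paper argues analytically: setting $\Delta_t(u)=\int_0^t\int_0^\infty(e^{z\psi(s,u)}-1)\nu(\mathrm{d}z)\mathrm{d}s$, it shows $\partial_u\Delta_t(u)\to0$ as $u\to-\infty$, hence $e^{-u\delta/2}e^{\Delta_t(u)-u\delta/2}\to\infty$, which is incompatible with $\mathbb{P}(Z_t\in(0,\delta])=0$ when $\mathbb{P}(Z_t=0)=0$. You instead truncate the big jumps, condition on the positive-probability event that $J_{>\delta}$ is inactive on $[0,t]$, and control $\mathbb{E}[Z_t^{(\delta)}]=\tfrac{1-e^{-bt}}{b}\int_0^{\delta}z\,\nu(\mathrm{d}z)\to0$ via Markov's inequality. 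Your route is more elementary and more robust (it uses only the L\'evy--It\^o decomposition, independence of the jump parts over disjoint size regions, localized pathwise uniqueness at the first big-jump time, and a first-moment bound, rather than the explicit affine transform of $Z_t$), at the cost of the coupling/uniqueness bookkeeping you correctly flag; the paper's route is a short closed computation once the characteristic function \eqref{eq:characteristic function of Z_t} is in hand. Both are complete proofs of the proposition.
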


\begin{proof} According to \cite[Lemma 1]{MR3451177}, $X_{t}^{x}$
possesses a density function given by
\[
f_{X_{t}^{x}}(y)=\int_{\mathbb{R}_{\geqslant0}}f_{Y_{t}^{x}}(y-z)\mu_{Z_{t}}(\mathrm{d}z),\quad y\geqslant0,
\]
where $f_{Y_{t}^{x}}(y)$, $y\in\mathbb{R}$ denotes the density function
of $Y_{t}^{x}$, $t>0$. Since $(Y_{t}^{x})_{t\geqslant0}$ is the
CIR process, as well-known, we have $f_{Y_{t}^{x}}(y)>0$ for $y>0$
and $f_{Y_{t}^{x}}(y)\equiv0$ for $y<0$ (see, e.g., Cox \textit{et
al.} \cite[Formula (18)]{MR785475} or Jeanblanc \textit{et al.} \cite[Proposition 6.3.2.1]{MR2568861}
in case $x>0$ and Ikeda and Watanabe \cite[p.222]{MR1011252} in
case $x=0$). It remains to prove the strict positivity of $f_{X_{t}^{x}}(y)$
for all $y\in\mathbb{R}_{>0}$.

Let $t>0$ and $y>0$ be fixed. It follows that
\[
f_{X_{t}^{x}}(y)\geqslant\int_{[0,\delta]}f_{Y_{t}^{x}}(y-z)\mu_{Z_{t}}(\mathrm{d}z),
\]
where $\delta>0$ is small enough with $\delta<y$. Since $f_{Y_{t}^{x}}(y-z)>0$
for all $z\in[0,\delta]$, it is enough to check that $\mu_{Z_{t}}([0,\delta])>0$.
If $\mathbb{P}(Z_{t}=0)>0$, then we are done. So we now suppose
\begin{equation}
\mathbb{P}(Z_{t}=0)=0.\label{ass. : Z_t=00003D0}
\end{equation}
 Let
\[
\Delta_t(u)=\int_{0}^{t}\int_{0}^{\infty}\left(e^{z\psi(s,u)}-1\right)\nu(\mathrm{d}z)\mathrm{d}s,\quad u\in\mathcal{U},
\]
where $\psi$ is given in \eqref{eq:psi}. By \eqref{ass. : Z_t=00003D0},
we conclude
\begin{align}
\mathbb{E}\left[e^{u(Z_{t}-\delta)}\right] & -\mathbb{E}\left[e^{u(Z_{t}-\delta)}\mathbbm{1}_{\lbrace Z_{t}=0\rbrace}\right]\nonumber \\
 & =e^{-u\delta}\left(\mathbb{E}\left[e^{uZ_{t}}\right]-\mathbb{E}\left[e^{uZ_{t}}\mathbbm{1}_{\lbrace Z_{t}=0\rbrace}\right]\right)\nonumber \\
 & =e^{-u\delta}\left(e^{\Delta_t(u)}-\mathbb{P}\left(Z_{t}=0\right)\right)\nonumber \\
 & =e^{-u\delta/2}e^{\Delta_t(u)-u\delta/2}.\label{eq: lapl. Z_t-delta}
\end{align}
For all $u\in(-\infty,-1]$ and $s\in[0,t]$, we have
\begin{align}
\frac{\partial}{\partial u}\left(e^{z\psi(s,u)}-1\right) & =\frac{ze^{-bs}}{\left(1-\frac{\sigma^{2}u}{2b}\left(1-e^{-bs}\right)\right)^{2}}\exp\left\lbrace \frac{zue^{-bs}}{1-\frac{\sigma^{2}u}{2b}\left(1-e^{-bs}\right)}\right\rbrace \nonumber \\
 & \leqslant ze^{-bs}\mathbbm{1}_{\lbrace z\leqslant1\rbrace}+ze^{-bs}e^{-c_{1}z}\mathbbm{1}_{\lbrace z>1\rbrace}\leqslant c_{2}e^{-bs}(z\wedge1),\label{eq: domi for partial delta}
\end{align}
for some positive constants $c_{1}$ and $c_{2}$. By
the differentiation lemma \cite[Lemma 16.2]{MR1897176}, we see that $\Delta_t(u)$
is differentiable at $u\in(-\infty,-1]$ and
\begin{equation}
\frac{\partial}{\partial u}\left(\Delta_t(u)\right)=\int_{0}^{t}\int_{0}^{\infty}\frac{\partial}{\partial u}\left(e^{z\psi(s,u)}-1\right)\nu(\mathrm{d}z)\mathrm{d}s,\quad u\in(-\infty,-1].\label{eq: partial delta}
\end{equation}
Note that $\partial/(\partial u)(\exp\lbrace z\psi(s,u)\rbrace-1)>0$
for $z>0$, $u\in(-\infty,-1]$ and $s\in[0,t]$. Therefore, $\Delta_t(u)$
is strictly increasing in $u$ on $(-\infty,-1]$. Moreover, we have
\[
\lim_{u\to-\infty}\frac{\partial}{\partial u}\left(e^{z\psi(s,u)}-1\right)=\exp\left\lbrace \frac{-2bz}{\sigma^{2}\left(e^{bs}-1\right)}\right\rbrace \lim_{u\to-\infty}\frac{ze^{-bs}}{\left(1-\frac{\sigma^{2}u}{2b}\left(1-e^{-bs}\right)\right)^{2}}=0.
\]
By \eqref{eq: domi for partial delta}, \eqref{eq: partial delta}
and the Lebesgue dominated convergence theorem, $\partial/(\partial u)\Delta_t(u)\to0$
as $u\to-\infty$. So $\partial/(\partial u)(\Delta_t(u)-u\delta/2)\to-\delta/2$
as $u\to-\infty$, which implies that $\Delta_t(u)-u\delta/2$ is monotone
in $u$ for sufficiently small $u$ and thus
\begin{equation}
\lim_{u\to-\infty}e^{-u\delta/2}e^{\Delta_t(u)-u\delta/2}=\infty.\label{eq: limes delta}
\end{equation}
It follows from \eqref{eq: lapl. Z_t-delta} and \eqref{eq: limes delta}
that
\[
\lim_{u\to-\infty}\bigg(\mathbb{E}\left[e^{u(Z_{t}-\delta)}\right]-\mathbb{E}\left[e^{u(Z_{t}-\delta)}\mathbbm{1}_{\lbrace Z_{t}=0\rbrace}\right]\bigg)=\infty.
\]
Now, we must have $\mathbb{P}(Z_{t}\in(0,\delta])>0$, otherwise
\begin{align*}
\lim_{u\to-\infty}\bigg(\mathbb{E} & \left[e^{u(Z_{t}-\delta)}\right]-\mathbb{E}\left[e^{u(Z_{t}-\delta)}\mathbbm{1}_{\lbrace Z_{t}=0\rbrace}\right]\bigg)\\
 & \quad=\lim_{u\to-\infty}\left(\mathbb{E}\left[e^{u(Z_{t}-\delta)}\mathbbm{1}_{\lbrace0<Z_{t}\leqslant\delta\rbrace}\right]+\mathbb{E}\left[e^{u(Z_{t}-\delta)}\mathbbm{1}_{\lbrace Z_{t}>\delta\rbrace}\right]\right)=0.
\end{align*}
This completes the proof. \end{proof}

\section{Moments of the JCIR process}
\label{sec:moments}

In this section we prove Theorem \ref{thm:fractional moments}. Our approach is essentially motivated by the proof of \cite[Theorem 25.3]{MR3185174}.\\

\textit{Proof of Theorem} \ref{thm:fractional moments}. ``(iii)$\Rightarrow$(i)'': Let $\kappa>0$ be a constant. Suppose that $\int_{\lbrace z>1\rbrace}z^{\kappa}\nu(\mathrm{d}z)<\infty$.
Let $x\in\mathbb{R}_{\geqslant0}$ and $t>0$ be arbitrary. Note
that for all $(t,u)\in\mathbb{R}_{\geqslant0}\times\mathcal{U}$,
\begin{align}
\mathbb{E}\left[e^{uZ_{t}}\right] & =\exp\left\lbrace \int_{0}^{t}\int_{0}^{\infty}\left(e^{z\psi(s,u)}-1\right)\nu(\mathrm{d}z)\mathrm{d}s\right\rbrace \nonumber \\
 & =\exp\left\lbrace \int_{0}^{t}\int_{0}^{\infty}\left(e^{z\psi(s,u)}-1\right)\nu_{1}(\mathrm{d}z)\mathrm{d}s\right\rbrace \nonumber \\
 & \quad\thinspace\cdot\exp\left\lbrace \int_{0}^{t}\int_{0}^{\infty}\left(e^{z\psi(s,u)}-1\right)\nu_{2}(\mathrm{d}z)\mathrm{d}s\right\rbrace ,\label{eq: deco lap. Z_t}
\end{align}
where $\nu_{1}(\mathrm{d}z):=\mathbbm{1}_{\lbrace z\leqslant1\rbrace}\nu(\mathrm{d}z)$
and $\nu_{2}(\mathrm{d}z):=\mathbbm{1}_{\lbrace z>1\rbrace}\nu(\mathrm{d}z)$.
Similarly to \eqref{eq:SDE Z_t}, for $i=1,2$, we define $(Z_{t}^{i})_{t\geqslant0}$
as the unique strong solution of
\[
\mathrm{d}Z_{t}^{i}=-bZ_{t}^{i}\mathrm{d}t+\sigma\sqrt{Z_{t}^{i}}\mathrm{d}B_{t}+\mathrm{d}J_{t}^{i},\quad t\geqslant0,\quad Z_{0}^{i}=0\text{ a.s.},
\]
where $(J_{t}^{i})_{t\geqslant0}$ is a subordinator of pure jump-type
with L\'evy measure $\nu_{i}$. By \eqref{eq:characteristic function of Z_t},
we have
\begin{equation}
\mathbb{E}\left[e^{uZ_{t}^{i}}\right]=\exp\left\lbrace \int_{0}^{t}\int_{0}^{\infty}\left(e^{z\psi(s,u)}-1\right)\nu_{i}(\mathrm{d}z)\mathrm{d}s\right\rbrace ,\quad i=1,2,\ (t,u)\in\mathbb{R}_{\geqslant0}\times\mathcal{U}.\label{eq: lap. Z_t}
\end{equation}
It follows from \eqref{eq: deco lap. Z_t} and \eqref{eq: lap. Z_t}
that
\begin{equation}
\mu_{Z_{t}}=\mu_{Z_{t}^{1}}\ast\mu_{Z_{t}^{2}}.\label{eq: deco mu_Z_t}
\end{equation}

Let $f(y):=(|y|\vee1)^{\kappa}$, $y\in\mathbb{R}$. Then $f$ is
locally bounded and submultiplicative by \cite[Proposition 25.4]{MR3185174},
i.e., there exists a constant $c_{1}>0$ such that $f(y_{1}+y_{2})\leqslant c_{1}f(y_{1})f(y_{2})$
for all $y_{1},y_{2}\in\mathbb{R}$. Further, it is easy to see that
for any constant $c>0$, there exists a constant $c_{2}>0$ such that
$f(y)\leqslant c_{2}\exp\lbrace c|y|\rbrace$, $y\in\mathbb{R}$.
By \eqref{eq: deco mu_X_t} and \eqref{eq: deco mu_Z_t}, we get
\begin{align}
\mathbb{E}\left[f\left(X_{t}^{x}\right)\right] & \leqslant c_{1}^{2}\mathbb{E}\left[f\left(Y_{t}^{x}\right)\right]\mathbb{E}\left[f\left(Z_{t}^{1}\right)\right]\mathbb{E}\left[f\left(Z_{t}^{2}\right)\right]\nonumber \\
 & \leqslant c_{1}^{2}c_{2}\mathbb{E}\left[f\left(Y_{t}^{x}\right)\right]\mathbb{E}\left[e^{cZ_{t}^{1}}\right]\mathbb{E}\left[f\left(Z_{t}^{2}\right)\right].\label{esti: E f(X_t^x)}
\end{align}
By \cite[Proposition 3]{MR2995525}, we have $\mathbb{E}[f(Y_{t}^{x})]<\infty$.
The finiteness of the exponential moments of $Z_{t}^{1}$, i.e., $\mathbb{E}[\exp\lbrace cZ_{t}^{1}\rbrace]<\infty$,
follows by \cite[Theorem 2.14 (b)]{MR3313754}, since $(J_{t}^{1})_{t\geqslant0}$
has only small jumps.

We next show that $\mathbb{E}[f(Z_{t}^{2})]<\infty$. Note that $(J_{t}^{2})_{t\geqslant0}$
has only big jumps. By \cite[Lemma 2]{MR3451177}, we know that $Z_{t}^{2}$
is compound Poisson distributed, namely, we can find a probability
measure $\rho_{t}$ on $\mathbb{R}_{\geqslant0}$ such that
\[
\mathbb{E}\left[e^{uZ_{t}^{2}}\right]=e^{\lambda_{t}(\widehat{\rho}_{t}(u)-1)},\quad(t,u)\in\mathbb{R}_{>0}\times\mathcal{U},
\]
where $\lambda_{t}>0$ and $\widehat{\rho}_{t}$ denotes the characteristic
function of the measure $\rho_{t}$. More precisely, according to
\cite[see p.292]{MR3451177}, we have
\[
\rho_{t}=\lambda_{t}^{-1}\int_{0}^{t}\int_{\lbrace z>1\rbrace}m_{\alpha(z,s),\beta(z,s)}\nu(\mathrm{d}z)\mathrm{d}s,
\]
where $m_{\alpha(z,s),\beta(z,s)}$ is a Bessel distribution with
parameters $\alpha(z,s)$ and $\beta(z,s)$ given by
\[
\alpha(z,s):=\frac{2bz}{\sigma^{2}\left(e^{bs}-1\right)}\quad\text{and}\quad\beta(z,s):=\frac{2be^{bs}}{\sigma^{2}\left(e^{bs}-1\right)},
\]
and
\[
\lambda_{t}=\int_{0}^{t}\int_{\lbrace z>1\rbrace}\left(1-e^{-\alpha(z,s)}\right)\nu(\mathrm{d}z)\mathrm{d}s<\infty.
\]
By the Fubini's theorem, we obtain
\begin{equation}
\int_{\mathbb{R}_{\geqslant0}}f(y)\rho_{t}(\mathrm{d}y)=\lambda_{t}^{-1}\int_{0}^{t}\int_{\lbrace z>1\rbrace}\left(\int_{\mathbb{R}_{\geqslant0}}f(y)m_{\alpha(z,s),\beta(z,s)}(\mathrm{d}y)\right)\nu(\mathrm{d}z)\mathrm{d}s.\label{eq:formula for rho}
\end{equation}
By Lemma \ref{lem:bound of bessel distribution}, we have
\begin{align}
 & \int_{\mathbb{R}_{\geqslant0}}f(y)m_{\alpha(z,s),\beta(z,s)}(\mathrm{d}y)\leqslant\int_{\mathbb{R}_{\geqslant0}}(1+y^{\kappa})m_{\alpha(z,s),\beta(z,s)}(\mathrm{d}y)\nonumber \\
 & \ \leqslant1+C_{1}\frac{1+\alpha(z,s)^{\kappa}}{\beta(z,s)^{\kappa}}\leqslant1+C_{1}\sigma^{2\kappa}(2b)^{-\kappa}(1-e^{-bs})^{\kappa}+C_{1}e^{-\kappa bs}z^{\kappa}.\label{esti: fdm}
\end{align}
It follows from \eqref{eq:formula for rho} and \eqref{esti: fdm}
that
\begin{equation}
\int_{\mathbb{R}_{\geqslant0}}f(y)\rho_{t}(\mathrm{d}y)<\infty.\label{eq:integral with respect to rho is finite}
\end{equation}
Moreover, using \eqref{eq:integral with respect to rho is finite}
together with the submultiplicativity of $f$, we get
\begin{align}
\int_{\mathbb{R}_{\geqslant0}}f(y)\rho_{t}^{\ast n}(\mathrm{d}y) & =\int_{\mathbb{R}_{\geqslant0}}\cdots\int_{\mathbb{R}_{\geqslant0}}f(y_{1}+\cdots+y_{n})\rho_{t}(\mathrm{d}y_{1})\cdots\rho_{t}(\mathrm{d}y_{n})\nonumber \\
 & \leqslant c_{1}^{n}\left(\int_{\mathbb{R}_{\geqslant0}}f(y)\rho_{t}(\mathrm{d}y)\right)^{n}<\infty,\label{eq3: proof k-moment}
\end{align}
which implies
\begin{equation}
\mathbb{E}\left[f\left(Z_{t}^{2}\right)\right]=\int_{\mathbb{R}_{\geqslant0}}f(y)\mu_{Z_{t}^{2}}(\mathrm{d}y)=e^{-\lambda_{t}}\sum_{n=0}^{\infty}\frac{\lambda_{t}^{n}}{n!}\int_{\mathbb{R}_{\geqslant0}}f(y)\rho_{t}^{\ast n}(\mathrm{d}y)<\infty.\label{eq:finiteness if of kappa-moment of Z_t^2}
\end{equation}
By \eqref{esti: E f(X_t^x)} and \eqref{eq:finiteness if of kappa-moment of Z_t^2},
we obtain $\mathbb{E}\left[f\left(X_{t}^{x}\right)\right]<\infty$.
It follows easily that $\mathbb{E}\left[\left(X_{t}^{x}\right)^{\kappa}\right]<\infty$.

``(i)$\Rightarrow$(ii)'': It is clear.

``(ii)$\Rightarrow$(iii)'': Suppose now that $\mathbb{E}\left[\left(X_{t}^{x}\right)^{\kappa}\right]<\infty$
for some $x\in\mathbb{R}_{\geqslant0}$ and $t>0$. By \eqref{eq: deco mu_X_t},
we obtain
\[
\mathbb{E}\left[\left(X_{t}^{x}\right)^{\kappa}\right]=\int_{\mathbb{R}_{\geqslant0}}\int_{\mathbb{R}_{\geqslant0}}(y+z)^{\kappa}\mu_{Y_{t}^{x}}(\mathrm{d}y)\mu_{Z_{t}}(\mathrm{d}z)<\infty.
\]
So $\int_{\mathbb{R}_{\geqslant0}}(y+z)^{\kappa}\mu_{Z_{t}}(\mathrm{d}z)<\infty$
for some $y\in\mathbb{R}_{\geqslant0}$, which implies
\begin{equation}
\mathbb{E}\left[Z_{t}^{\kappa}\right]=\int_{\mathbb{R}_{\geqslant0}}z^{\kappa}\mu_{Z_{t}}(\mathrm{d}z)\leqslant\int_{\mathbb{R}_{\geqslant0}}(y+z)^{\kappa}\mu_{Z_{t}}(\mathrm{d}z)<\infty.\label{eq:finiteness of kappa moments of Z_t}
\end{equation}
Similarly, we can use \eqref{eq:finiteness of kappa moments of Z_t}
and \eqref{eq: deco mu_Z_t} to conclude that $(Z_{t}^{2})_{t\geqslant0}$
has finite moment of order $\kappa$. Let the function $f$ be as
above. Then $\mathbb{E}\left[f\left(Z_{t}^{2}\right)\right]\leqslant1+\mathbb{E}\left[\left(Z_{t}^{2}\right)^{\kappa}\right]<\infty$.
Since now all the summands in the last identity of \eqref{eq:finiteness if of kappa-moment of Z_t^2}
are finite, the summand corresponding to $n=1$ is also finite and
thus
\[
\int_{\mathbb{R}_{\geqslant0}}y^{\kappa}\rho_{t}(\mathrm{d}y)\leqslant\int_{\mathbb{R}_{\geqslant0}}f(y)\rho_{t}(\mathrm{d}y)<\infty.
\]
By the Fubini's theorem, we obtain
\begin{equation}
\int_{\mathbb{R}_{\geqslant0}}y^{\kappa}\rho_{t}(\mathrm{d}y)=\lambda_{t}^{-1}\int_{0}^{t}\int_{\lbrace z>1\rbrace}\left(\int_{\mathbb{R}_{\geqslant0}}y^{\kappa}m_{\alpha(z,s),\beta(z,s)}(\mathrm{d}y)\right)\nu(\mathrm{d}z)\mathrm{d}s<\infty.\label{eq1: proof k-moment}
\end{equation}
Noting that for all $s\in[0,t]$ and $z>1$,
\[
\alpha(z,s)=\frac{2bz}{\sigma^{2}\left(e^{bs}-1\right)}\geqslant\frac{2b}{\sigma^{2}\left(e^{bt}-1\right)}.
\]
By Lemma \ref{lem:bound of bessel distribution}, we can find a constant
$c_{3}=c_{3}(t)>0$ such that
\begin{equation}
\int_{\mathbb{R}_{\geqslant0}}y^{\kappa}m_{\alpha(z,s),\beta(z,s)}(\mathrm{d}y)\geqslant c_{3}\left(\frac{\alpha(z,s)}{\beta(z,s)}\right)^{\kappa}=c_{3}z^{\kappa}e^{-\kappa bs},\quad s\in[0,t],\ z>1.\label{eq2: proof k-moment}
\end{equation}
It follows from \eqref{eq1: proof k-moment} and \eqref{eq2: proof k-moment}
that $\int_{\lbrace z>1\rbrace}z^{\kappa}\nu(\mathrm{d}z)<\infty$.\qed\\

\begin{rem}
In Theorem \ref{thm:fractional moments} we have given a complete characterization of the existence of fractional moments for the JCIR process. For an explicit formula of integral moments of general CBI processes, the reader is referred to Barzy \textit{et al.} \cite{MR3540486}.
\end{rem}

Based on the proof of Theorem \ref{thm:fractional moments} we get
the following corollary.

\begin{cor}\label{prop:sup moments exist} Let $\kappa>0$ be a constant.
Suppose $\int_{\lbrace z>1\rbrace}z^{\kappa}\nu(\mathrm{d}z)<\infty$.
Then, for all $x\in\mathbb{R}_{\geqslant0}$ and $T>0$,
\[
\sup_{t\in[0,T]}\mathbb{E}_{x}\left[X_{t}^{\kappa}\right]<\infty.
\]
\end{cor}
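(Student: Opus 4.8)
The plan is to observe that the proof of Theorem~\ref{thm:fractional moments} already produces bounds that are uniform on compact time intervals, once the dependence on $t$ is made explicit. Fix $x\in\mathbb{R}_{\geqslant0}$ and $T>0$. Since $\mathbb{E}_x[X_t^\kappa]=\mathbb{E}[(X_t^x)^\kappa]\leqslant\mathbb{E}[f(X_t^x)]$ for $f(y)=(|y|\vee1)^\kappa$, the estimate \eqref{esti: E f(X_t^x)} reduces the task to bounding
\[
\mathbb{E}[f(Y_t^x)],\qquad \mathbb{E}\bigl[e^{cZ_t^1}\bigr],\qquad \mathbb{E}[f(Z_t^2)]
\]
uniformly for $t\in[0,T]$, the constants $c_1,c_2,c>0$ in \eqref{esti: E f(X_t^x)} being independent of $t$.

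For the big-jump factor I would retrace \eqref{eq3: proof k-moment}--\eqref{eq:finiteness if of kappa-moment of Z_t^2}, which give
\[
\mathbb{E}[f(Z_t^2)]\leqslant\exp\Bigl\{\lambda_t\bigl(c_1\int_{\mathbb{R}_{\geqslant0}}f\,\mathrm{d}\rho_t-1\bigr)\Bigr\}\leqslant\exp\Bigl\{c_1\lambda_t\int_{\mathbb{R}_{\geqslant0}}f\,\mathrm{d}\rho_t\Bigr\}.
\]
By \eqref{eq:formula for rho}, $\lambda_t\int f\,\mathrm{d}\rho_t=\int_0^t\int_{\{z>1\}}\bigl(\int f\,\mathrm{d}m_{\alpha(z,s),\beta(z,s)}\bigr)\nu(\mathrm{d}z)\,\mathrm{d}s$, and bounding the inner integral by \eqref{esti: fdm} while using $(1-e^{-bs})^\kappa\leqslant1$ and $e^{-\kappa bs}\leqslant1$ removes the $s$-dependence. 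The double integral is then at most $TK$, where $K:=(1+C_1\sigma^{2\kappa}(2b)^{-\kappa})\,\nu(\{z>1\})+C_1\int_{\{z>1\}}z^\kappa\nu(\mathrm{d}z)$ is finite by the standing hypothesis and the Lévy property \eqref{eq:levy measure property}. Hence $\sup_{t\in[0,T]}\mathbb{E}[f(Z_t^2)]\leqslant e^{c_1TK}<\infty$.

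The CIR factor can be handled by reusing Lemma~\ref{lem:bound of bessel distribution}. Comparing \eqref{eq:characteristic function of the CIR} with the Bessel transform $\widehat m_{\alpha,\beta}(u)=\exp\{\alpha u/(\beta-u)\}$ shows that $Y_t^x$ is the convolution of a Gamma law of shape $2a/\sigma^2$ and rate $\beta(x,t)$ with the Bessel law $m_{\alpha(x,t),\beta(x,t)}$ (which is $\delta_0$ when $x=0$). For $t\in(0,T]$ one has $\beta(x,t)\geqslant\beta_\ast:=2b/(\sigma^2(1-e^{-bT}))$ and $\alpha(x,t)/\beta(x,t)=xe^{-bt}\leqslant x$, so the Gamma moments stay bounded and Lemma~\ref{lem:bound of bessel distribution}(i) bounds the Bessel moment by $C_1(\beta_\ast^{-\kappa}+x^\kappa)$; the value at $t=0$ is $x^\kappa$. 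For the small-jump factor I would first fix $c=c(T)>0$ so small that $1-\tfrac{\sigma^2c}{2b}(1-e^{-bT})>0$, so that $\psi(s,c)$ from \eqref{eq:psi} is bounded by some $\psi^\ast<\infty$ for all $s\in[0,T]$; since $\nu_1$ is carried by $(0,1]$ and $e^{z\psi(s,c)}-1\leqslant z\psi^\ast e^{\psi^\ast}$ there, the cumulant
\[
\int_0^t\!\int_0^1\bigl(e^{z\psi(s,c)}-1\bigr)\nu_1(\mathrm{d}z)\,\mathrm{d}s\leqslant T\psi^\ast e^{\psi^\ast}\!\int_{\{z\leqslant1\}}\! z\,\nu(\mathrm{d}z)
\]
is finite and independent of $t\in[0,T]$, whence $\sup_{t\in[0,T]}\mathbb{E}[e^{cZ_t^1}]<\infty$.

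The main obstacle is the small-jump factor. On the one hand, $c$ must be chosen in terms of $T$ to keep the denominator in \eqref{eq:psi} bounded away from zero on $[0,T]$; on the other hand, the cumulant formula \eqref{eq: lap. Z_t} was derived only for $u\in\mathcal{U}$, so one must justify its validity at the positive value $u=c$. This is precisely where the fact that $(J_t^1)_{t\geqslant0}$ has only small (bounded) jumps is used, the required analytic extension being supplied by \cite[Theorem 2.14 (b)]{MR3313754}. Once the formula holds at $u=c$, combining the three uniform bounds through \eqref{esti: E f(X_t^x)} yields $\sup_{t\in[0,T]}\mathbb{E}_x[X_t^\kappa]<\infty$.
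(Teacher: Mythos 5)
Your proposal is correct and follows the same architecture as the paper's proof: reduce via \eqref{esti: E f(X_t^x)} to uniform-in-$t$ bounds on the three factors, control $Z_t^1$ by choosing $c>0$ small enough that $\psi(\cdot,c)$ stays well behaved and invoking \cite[Theorem 2.14 (b)]{MR3313754} to justify the cumulant formula at the positive argument, and control $Z_t^2$ through the compound-Poisson representation together with the Bessel moment bound \eqref{esti: fdm}; your explicit estimate $\lambda_t\int f\,\mathrm{d}\rho_t\leqslant TK$ is a quantitative version of the paper's observation that $t\mapsto\int_0^t\int_{\{z>1\}}\bigl(\int f\,\mathrm{d}m_{\alpha(z,s),\beta(z,s)}\bigr)\nu(\mathrm{d}z)\,\mathrm{d}s$ is nondecreasing, so both arguments end up bounding $\mathbb{E}[f(Z_t^2)]$ by $\exp\{c_1\lambda_T\int f\,\mathrm{d}\rho_T\}$. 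The one genuine divergence is the CIR factor: the paper simply cites \cite[Proposition 3]{MR2995525} for $\sup_{t\geqslant0}\mathbb{E}[(Y_t^x)^\kappa]<\infty$, whereas you rederive the bound from \eqref{eq:characteristic function of the CIR} by recognizing $\mu_{Y_t^x}$ as the convolution of a Gamma law with a Bessel law and reusing Lemma \ref{lem:bound of bessel distribution} (i) with $\alpha/\beta=xe^{-bt}$ and $\beta\geqslant 2b/(\sigma^2(1-e^{-bT}))$ on $(0,T]$. That computation is correct (and degenerates gracefully at $a=0$ and $t=0$), so your route buys a self-contained argument that stays entirely within the machinery of Section \ref{subsec:bessel distr}, at the cost of a little extra bookkeeping and of yielding uniformity only on compact time intervals rather than on all of $\mathbb{R}_{\geqslant0}$ --- which is all the corollary requires.
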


\begin{proof} Let $f$, $Z_{t}^{1}$ and $Z_{t}^{2}$ be as in the
proof of Theorem \ref{thm:fractional moments}. Note that $|y|^{\kappa}\leqslant f(y)\le|y|^{\kappa}+1$
for all $y\in\mathbb{R}$. Since $\sup_{t\in\mathbb{R}_{\geqslant0}}\mathbb{E}[(Y_{t}^{x})^{\kappa}]<\infty$
due to \cite[Proposition 3]{MR2995525}, by \eqref{esti: E f(X_t^x)},
it suffices to check that
\[
\sup_{t\in[0,T]}\mathbb{E}\left[e^{cZ_{t}^{1}}\right]<\infty\quad\text{and}\quad\sup_{t\in[0,T]}\mathbb{E}\left[\left(Z_{t}^{2}\right)^{\kappa}\right]<\infty,\quad T>0,
\]
where $c>0$ is a constant to be chosen. It follows from \cite[Theorem 2.14 (b)]{MR3313754}
that
\[
\mathbb{E}\left[e^{cZ_{t}^{1}}\right]=\exp\left\lbrace \int_{0}^{t}\int_{0}^{1}\left(e^{z\psi(s,c)}-1\right)\nu_{1}(\mathrm{d}z)\mathrm{d}s\right\rbrace<\infty ,\quad c\in\mathbb{R},
\]
where $\psi$ is given in \eqref{eq:psi}. Now, we choose $c>0$ sufficiently
small such that $\psi(s,c)\geqslant0$ for all $s\in\mathbb{R}_{\geqslant0}$.
Hence, $\sup_{t\in[0,T]}\mathbb{E}[\exp\lbrace cZ_{t}^{1}\rbrace]\leqslant\mathbb{E}[\exp\lbrace cZ_T^1\rbrace]<\infty$. We next show that $\sup_{t\in[0,T]}\mathbb{E}\left[\left(Z_{t}^{2}\right)^{\kappa}\right]<\infty$.
By \eqref{eq:formula for rho}, \eqref{eq3: proof k-moment} and \eqref{eq:finiteness if of kappa-moment of Z_t^2},
we have for all $t\in[0,T],$
\begin{align}
\mathbb{E}\left[f\left(Z_{t}^{2}\right)\right] & \leqslant\exp\left\lbrace-\lambda_{t}+c_{1}\lambda_{t}\int_{\mathbb{R}_{\geqslant0}}f(y)\rho_{t}(\mathrm{d}y)\right\rbrace\nonumber \\
 & =\exp\left\lbrace-\lambda_{t}+c_{1}\int_{0}^{t}\int_{\lbrace z>1\rbrace}\left(\int_{\mathbb{R}_{\geqslant0}}f(y)m_{\alpha(z,s),\beta(z,s)}(\mathrm{d}y)\right)\nu(\mathrm{d}z)\mathrm{d}s\right\rbrace\nonumber \\
 & \leqslant\exp\left\lbrace c_{1}\int_{0}^{T}\int_{\lbrace z>1\rbrace}\left(\int_{\mathbb{R}_{\geqslant0}}f(y)m_{\alpha(z,s),\beta(z,s)}(\mathrm{d}y)\right)\nu(\mathrm{d}z)\mathrm{d}s\right\rbrace\nonumber \\
 & =\exp\left\lbrace c_{1}\lambda_{T}\int_{\mathbb{R}_{\geqslant0}}f(y)\rho_{T}(\mathrm{d}y)\right\rbrace.\label{eq: proof, coro, k-momonet}
\end{align}
It follows from \eqref{eq:integral with respect to rho is finite}
and \eqref{eq: proof, coro, k-momonet} that
\[
\sup_{t\in[0,T]}\mathbb{E}\left[\left(Z_{t}^{2}\right)^{\kappa}\right]
\leqslant\sup_{t\in[0,T]}\mathbb{E}\left[f\left(Z_{t}^{2}\right)\right]
\leqslant\exp\left\lbrace c_{1}\lambda_{T}\int_{\mathbb{R}_{\geqslant0}}f(y)\rho_{T}(\mathrm{d}y)\right\rbrace<\infty.
\]
This completes the proof. \end{proof}

\section{Ergodicity of the JCIR process}
\label{sec:ergodicity of the jump-diffusion CIR process}

In this section we prove the ergodicity of the JCIR process $X$ provided
that
\begin{equation}
\int_{\lbrace z>1\rbrace}\log z\nu(\mathrm{d}z)<\infty.\label{eq:integrability of log z}
\end{equation}
Our approach is based on the general theory of Meyn and Tweedie \cite{MR1234295}
for the ergodicity of Markov processes. The essential step is to find
a Foster-Lyapunov function in the sense of \cite[condition (CD2)]{MR1234295}.
In view of \eqref{eq:integrability of log z}, we choose the Foster-Lyapunov
function to be $V(x)=\log(1+x)$, $x\in\mathbb{R}_{\geqslant0}$.
We first show that this function $V$ is in the domain of the \textit{extended
generator} (see \cite[pp. 521-522]{MR1234295} for a definition) of
$X$.\\

\begin{lem}\label{lem:extended generator of the foster lyapunov function}
Suppose \eqref{eq:integrability of log z} is true. Let $V(x):=\log(1+x)$,
$x\in\mathbb{R}_{\geqslant0}$. Then for all $t>0$ and $x\in\mathbb{R}_{\geqslant0}$,
we have $\mathbb{E}_{x}\left[\int_{0}^{t}\left\vert \mathcal{A}V\left(X_{s}\right)\right\vert \mathrm{d}s\right]<\infty$
and
\begin{equation}
\mathbb{E}_{x}\left[V(X_{t})\right]=V(x)+\mathbb{E}_{x}\left[\int_{0}^{t}\mathcal{A}V\left(X_{s}\right)\mathrm{d}s\right],\label{eq:extended generator identity 1}
\end{equation}
where $\mathcal{A}$ is given in \eqref{eq:infinitesimal generator of JCIR}.
In other words, $V$ is in the domain of the extended
generator of $X$. \end{lem}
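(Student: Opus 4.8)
The plan is to verify the two claims — the integrability $\mathbb{E}_x[\int_0^t|\mathcal{A}V(X_s)|\,\mathrm{d}s]<\infty$ and the Dynkin-type identity \eqref{eq:extended generator identity 1} — by first computing $\mathcal{A}V$ explicitly and bounding it by a constant, and then justifying the application of Dynkin's formula (or It\^o's formula for semimartingales) to the function $V(x)=\log(1+x)$. Since $V\notin C_c^2$, the generator formula \eqref{eq:infinitesimal generator of JCIR} does not apply off the shelf, so the real content is to show that $V$ behaves well enough that the extended-generator identity still holds. First I would compute the derivatives: $V'(x)=(1+x)^{-1}$ and $V''(x)=-(1+x)^{-2}$, both of which are bounded on $\mathbb{R}_{\geqslant0}$, with $V'(x)\leqslant 1$ and $|V''(x)|\leqslant 1$.

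Next I would estimate each of the three terms in $\mathcal{A}V(x)=(\mathcal{D}V)(x)+(\mathcal{J}V)(x)$. For the drift-diffusion part $(\mathcal{D}V)(x)=(a-bx)(1+x)^{-1}-\tfrac12\sigma^2 x(1+x)^{-2}$, boundedness is immediate: as $x\to\infty$ the first term tends to $-b$ and the second to $0$, so $|(\mathcal{D}V)(x)|$ is bounded by some constant uniformly in $x\geqslant 0$. For the jump part I would split the integral at $z=1$. On $\{z\leqslant 1\}$ I would use the mean-value bound $V(x+z)-V(x)\leqslant V'(x)z\leqslant z$ together with the concavity of $V$, so that $\int_{\{z\leqslant 1\}}(V(x+z)-V(x))\nu(\mathrm{d}z)\leqslant\int_{\{z\leqslant 1\}}z\,\nu(\mathrm{d}z)<\infty$ by \eqref{eq:levy measure property}. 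On $\{z>1\}$ the key is the elementary inequality $\log(1+x+z)-\log(1+x)=\log\bigl(1+\tfrac{z}{1+x}\bigr)\leqslant\log(1+z)\leqslant \log 2+\log z$ for $z>1$, so that $\int_{\{z>1\}}(V(x+z)-V(x))\nu(\mathrm{d}z)\leqslant(\log 2)\nu(\{z>1\})+\int_{\{z>1\}}\log z\,\nu(\mathrm{d}z)<\infty$ precisely by the standing assumption \eqref{eq:integrability of log z}. Combining these, $\mathcal{A}V$ is bounded by a constant $K$ uniformly in $x\geqslant 0$, which immediately yields $\mathbb{E}_x[\int_0^t|\mathcal{A}V(X_s)|\,\mathrm{d}s]\leqslant Kt<\infty$.

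For the identity \eqref{eq:extended generator identity 1} I would apply It\^o's formula for semimartingales to $V(X_t)$ using the SDE \eqref{eq:SDE JCIR}, writing the jump term via the Poisson random measure $N(\mathrm{d}s,\mathrm{d}z)$ from \eqref{eq:levy ito decomposition of J_t}. This produces $V(X_t)=V(x)+\int_0^t\mathcal{A}V(X_{s-})\,\mathrm{d}s+M_t$, where $M_t$ collects the Brownian stochastic integral $\int_0^t\sigma\sqrt{X_s}\,V'(X_s)\,\mathrm{d}B_s$ and the compensated jump integral $\int_0^t\int_0^\infty(V(X_{s-}+z)-V(X_{s-}))\widetilde{N}(\mathrm{d}s,\mathrm{d}z)$. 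The plan is then to show $M$ is a true martingale (not merely a local one), so that taking expectations kills $M_t$ and leaves \eqref{eq:extended generator identity 1}. The Brownian part is handled by noting $V'(X_s)\leqslant 1$ together with $\sup_{s\leqslant t}\mathbb{E}_x[X_s]<\infty$ — which follows from Corollary \ref{prop:sup moments exist} in the case $\kappa=1$ provided $\int_{\{z>1\}}z\,\nu(\mathrm{d}z)<\infty$; however, under the weaker hypothesis \eqref{eq:integrability of log z} one cannot assume a first moment, so I would instead bound the jump integrand directly and argue integrability of the compensator from the same $z\wedge\log z$ estimates above.

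\textbf{Main obstacle.} The delicate point is the passage from the \emph{local} martingale $M$ to a genuine martingale under only the logarithmic-moment assumption \eqref{eq:integrability of log z}, since $X$ may fail to have a finite first moment. I expect to circumvent this by a localization argument: introduce stopping times $\tau_n=\inf\{t:X_t\geqslant n\}$, apply the ordinary Dynkin formula on $[0,t\wedge\tau_n]$ where everything is bounded, and then pass to the limit $n\to\infty$. The limit is justified by dominated convergence on the right side (using the uniform bound $|\mathcal{A}V|\leqslant K$) and by monotone/dominated convergence on the left, exploiting that $V(x)=\log(1+x)$ grows slowly enough that $V(X_{t\wedge\tau_n})\to V(X_t)$ in $L^1$; the logarithmic growth of $V$ is exactly what the hypothesis \eqref{eq:integrability of log z} is calibrated to control, so the estimate on the jump part of $\mathcal{A}V$ is what makes the localization close.
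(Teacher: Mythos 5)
Your computation and boundedness of $\mathcal{A}V$ (the split into $\mathcal{D}V$ and the jump part cut at $z=1$, with the bound $z$ on small jumps and $\log 2+\log z$ on big jumps) is exactly the paper's argument and fully settles the first claim. The gap is in your proof of \eqref{eq:extended generator identity 1}. You correctly observe that under \eqref{eq:integrability of log z} alone $X_t$ need not have a first moment, so the estimate of the Brownian integral via $V'(X_s)\leqslant1$ together with $\sup_{s\leqslant t}\mathbb{E}_x[X_s]<\infty$ is unavailable; but your replacement --- localize at $\tau_n=\inf\{t:X_t\geqslant n\}$ and pass to the limit --- leaves the essential step unproved. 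The almost sure convergence $V(X_{t\wedge\tau_n})\to V(X_t)$ is clear, but the claimed $L^1$ convergence requires uniform integrability of $\{\log(1+X_{t\wedge\tau_n})\}_n$, i.e.\ essentially a bound on $\mathbb{E}_x[\log(1+\sup_{s\leqslant t}X_s)]$. This does not follow from $|\mathcal{A}V|\leqslant K$ nor from anything you establish: controlling the running supremum involves the martingale $\int_0^{\cdot}\sqrt{X_s}\,\mathrm{d}B_s$, whose quadratic variation $\int_0^tX_s\,\mathrm{d}s$ is precisely the quantity you cannot assume integrable. So the localization does not close as written.

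The paper avoids this entirely by showing that the local martingale $M_t$ is a true martingale term by term, with no moment assumption on $X$. The observation you miss is that the Brownian integrand is $\sigma\sqrt{X_s}\,V'(X_s)=\sigma\sqrt{X_s}/(1+X_s)$, which is bounded (by $\sigma/2$, since $1+x\geqslant2\sqrt{x}$), so that $\mathbb{E}_x\bigl[\bigl(\sigma\int_0^t\sqrt{X_s}V'(X_s)\,\mathrm{d}B_s\bigr)^2\bigr]\leqslant\sigma^2t$ and the Brownian part is a square-integrable martingale outright; you sliced the product as $V'(X_s)\leqslant1$ times $\sqrt{X_s}$, which is what forced the unavailable first moment into the argument. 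Similarly, the compensated small-jump integral is square-integrable because $|V(y+z)-V(y)|\leqslant z$ gives a compensator bounded by $t\int_{\{z\leqslant1\}}z^2\nu(\mathrm{d}z)<\infty$, and the big-jump part is a martingale because $|V(y+z)-V(y)|\leqslant\log2+\log z$ is $\nu$-integrable on $\{z>1\}$ by \eqref{eq:integrability of log z} --- all bounds uniform in the state $y$. Replacing your localization step with these three direct estimates makes the proof complete and coincides with the paper's.
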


\begin{proof} It is easy to see that $V\in C^{2}(\mathbb{R}_{\geqslant0},\mathbb{R})$
and
\[
V'(x):=\frac{\partial}{\partial x}V(x)=(1+x)^{-1}\quad\text{and}\quad V''(x):=\frac{\partial^{2}}{\partial x^{2}}V(x)=-(1+x)^{-2}.
\]
Let $x\in\mathbb{R}_{\geqslant0}$ be fixed and assume that $X_{0}=x$
almost surely. In view of the L\'evy-It\^o decomposition of $\left(J_{t}\right)_{t\geqslant0}$
in \eqref{eq:levy ito decomposition of J_t}, we have
\[
X_{t}=x+\int_{0}^{t}(a-bX_{s})\mathrm{d}s+\sigma\int_{0}^{t}\sqrt{X_{s}}\mathrm{d}B_{s}+\int_{0}^{t}\int_{0}^{\infty}zN(\mathrm{d}s,\mathrm{d}z),\quad t\geqslant0,
\]
where $N(\mathrm{d}s,\mathrm{d}z)$ is defined in \eqref{eq:levy ito decomposition of J_t}.
By It\^o's formula, we obtain
\begin{align}
V(X_{t})-V(X_{0}) & =\int_{0}^{t}\left(a-bX_{s}\right)V'\left(X_{s}\right)\mathrm{d}s+\frac{\sigma^{2}}{2}\int_{0}^{t}X_{s}V''\left(X_{s}\right)\mathrm{d}s\nonumber \\
 & \quad+\sigma\int_{0}^{t}\sqrt{X_{s}}V'\left(X_{s}\right)\mathrm{d}B_{s}\nonumber \\
 & \quad+\int_{0}^{t}\int_{0}^{\infty}\left(V\left(X_{s-}+z\right)-V\left(X_{s-}\right)\right)N(\mathrm{d}s,\mathrm{d}z)\nonumber \\
 & =\int_{0}^{t}\left(a-bX_{s}\right)V'\left(X_{s}\right)\mathrm{d}s+\frac{\sigma^{2}}{2}\int_{0}^{t}X_{s}V''\left(X_{s}\right)\mathrm{d}s\nonumber \\
 & \quad+\int_{0}^{t}\int_{0}^{\infty}\left(V\left(X_{s-}+z\right)-V\left(X_{s-}\right)\right)\nu(\mathrm{d}z)\mathrm{d}s\nonumber \\
 & \quad+\sigma\int_{0}^{t}\sqrt{X_{s}}V'\left(X_{s}\right)\mathrm{d}B_{s}\nonumber \\
 & \quad+\int_{0}^{t}\int_{0}^{\infty}\left(V\left(X_{s-}+z\right)-V\left(X_{s-}\right)\right)\widetilde{N}(\mathrm{d}s,\mathrm{d}z)\nonumber \\
 & =\int_{0}^{t}(\mathcal{A}V)\left(X_{s}\right)\mathrm{d}s+M_{t}(V),\quad t\geqslant0,\label{eq:ito formula to g in the log case}
\end{align}
where $\widetilde{N}(\mathrm{d}s,\mathrm{d}z):=N(\mathrm{d}s,\mathrm{d}z)-\nu(\mathrm{d}z)\mathrm{d}s$
and
\begin{align*}
M_{t}(V) & :=\sigma\int_{0}^{t}\sqrt{X_{s}}V'\left(X_{s}\right)\mathrm{d}B_{s}\\
 & \quad+\int_{0}^{t}\int_{\lbrace z\leqslant1\rbrace}\left(V\left(X_{s-}+z\right)-V\left(X_{s-}\right)\right)\widetilde{N}(\mathrm{d}s,\mathrm{d}z)\\
 & \quad+\int_{0}^{t}\int_{\lbrace z>1\rbrace}\left(V\left(X_{s-}+z\right)-V\left(X_{s-}\right)\right)\widetilde{N}(\mathrm{d}s,\mathrm{d}z)\\
 & =D_{t}+J_{\ast,t}+J_{t}^{\ast}.
\end{align*}
Clearly, if $(M_{t}(V))_{t\geqslant0}$ is a martingale with respect
to the filtration $(\mathcal{F}_{t})_{t\geqslant0}$, by taking the
expectation of both sides of \eqref{eq:ito formula to g in the log case},
we see that condition \eqref{eq:extended generator identity 1} holds.\\

We start to prove that $(M_{t}(V))_{t\geqslant0}$ is a martingale
with respect to the filtration $(\mathcal{F}_{t})_{t\geqslant0}$.
Since
\[
\mathbb{E}_{x}\left[(D_{t})^{2}\right]=\sigma^{2}\int_{0}^{t}\mathbb{E}_{x}\left[X_{s}\left(1+X_{s}\right)^{-2}\right]\mathrm{d}s\leqslant\sigma^{2}\int_{0}^{t}\mathbb{E}_{x}\left[\left(1+X_{s}\right)^{-1}\right]\mathrm{d}s\leqslant t\sigma^{2}<\infty,
\]
it follows that $(D_{t})_{t\geqslant0}$ is a square-integrable martingale.
Note that
\begin{equation}
|V(y+z)-V(y)|\leqslant z\sup_{y\in\mathbb{R}_{\geqslant0}}\left\vert V'(y)\right\vert \leqslant z,\quad y,z\in\mathbb{R}_{\geqslant0}.\label{eq: mean value thm for V}
\end{equation}
Therefore,
\begin{align*}
\mathbb{E}_{x} & \left[\int_{0}^{t}\int_{\lbrace z\leqslant1\rbrace}\left(V\left(X_{s-}+z\right)-V\left(X_{s-}\right)\right)^{2}\nu(\mathrm{d}z)\mathrm{d}s\right]\leqslant t\int_{\lbrace z\leqslant1\rbrace}z^{2}\nu(\mathrm{d}z)<\infty,
\end{align*}
which implies that $(J_{\ast,t})_{t\geqslant0}$ is also a square-integrable
martingale by \cite[pp. 62, 63]{MR1011252}. If $y\mathbb{\in R}_{\geqslant0}$
and $z>1$, then
\begin{equation}
\left\vert V(y+z)-V(y)\right\vert =\log\left(1+\frac{z}{1+y}\right)\leqslant\log(1+z)\leqslant\log(2)+\log(z).\label{eq:estimate for V(x+z)-V(x) if z>1}
\end{equation}
So
\begin{align*}
\mathbb{E}_{x} & \left[\int_{0}^{t}\int_{\lbrace z>1\rbrace}\left\vert V(X_{s-}+z)-V(X_{s-})\right\vert \nu(\mathrm{d}z)\mathrm{d}s\right]\\
 & \quad\leqslant t\int_{\lbrace z>1\rbrace}\left(\log(2)+\log(z)\right)\nu(\mathrm{d}z)\\
 & \quad=t\log(2)\nu(\lbrace z>1\rbrace)+t\int_{\lbrace z>1\rbrace}\log(z)\nu(\mathrm{d}z)<\infty,\quad t\geqslant0,
\end{align*}
and hence, by \cite[Lemma 3.1 and p. 62]{MR1011252}, $(J_{t}^{\ast})_{t\geqslant0}$
is a martingale. Consequently, $(M_{t}(V))_{t\geqslant0}=(D_{t}+J_{\ast,t}+J_{t}^{\ast})_{t\geqslant0}$
is a martingale with respect to the filtration $(\mathcal{F}_{t})_{t\geqslant0}$.\\

Next, we show that $\mathbb{E}_{x}\left[\int_{0}^{t}\left\vert \mathcal{A}V\left(X_{s}\right)\right\vert \mathrm{d}s\right]<\infty$
for all $t\geqslant0$. By the decomposition of $\mathcal{A}$ into
a \emph{diffusion part} $\mathcal{D}$ and a \emph{jump part} $\mathcal{J}$
as introduced in Section \ref{sec:prelim}, we can write $\mathcal{A}V=\mathcal{D}V+\mathcal{J}V$.
Concerning the diffusion part $\mathcal{D}V$, it is easy to see that
\begin{equation}
\sup_{y\mathbb{\in R}_{\geqslant0}}\left\vert (\mathcal{D}V)(y)\right\vert =\sup_{y\mathbb{\in R}_{\geqslant0}}\left\vert (a-by)(1+y)^{-1}-\frac{\sigma^{2}}{2}y(1+y)^{-2}\right\vert <\infty.\label{esti: DV, log}
\end{equation}
For the jump part $\mathcal{J}V$, we decompose it further as $\mathcal{J}V=\mathcal{J}_{\ast}V+\mathcal{J}^{\ast}V$,
where
\begin{align}
(\mathcal{J}_{\ast}V)(y) & =\int_{\lbrace z\leqslant1\rbrace}\left(V(y+z)-V(y)\right)\nu(\mathrm{d}z),\label{eq:small jump part}\\
(\mathcal{J}^{\ast}V)(y) & =\int_{\lbrace z>1\rbrace}\left(V(y+z)-V(y)\right)\nu(\mathrm{d}z).\label{eq:big jump part}
\end{align}
By \eqref{eq: mean value thm for V}, we have
\begin{equation}
\left\vert (\mathcal{J}_{\ast}V)(y)\right\vert \leqslant\int_{\lbrace z\leqslant1\rbrace}z\nu(\mathrm{d}z)<\infty,\quad y\in\mathbb{R}_{\geqslant0}.\label{eq:finiteness of small jumps}
\end{equation}
Concerning $\mathcal{J}^{\ast}$, it follows from \eqref{eq:estimate for V(x+z)-V(x) if z>1}
that
\begin{equation}
\left\vert (\mathcal{J}^{\ast}V)(y)\right\vert \leqslant\log(2)\nu(\lbrace z>1\rbrace)+\int_{\lbrace z>1\rbrace}\log z\nu(\mathrm{d}z)<\infty,\quad y\in\mathbb{R}_{\geqslant0}.\label{eq:finiteness of big jumps}
\end{equation}
Combining \eqref{esti: DV, log}, \eqref{eq:finiteness of small jumps}
and \eqref{eq:finiteness of big jumps} yields that $\vert\mathcal{A}V\vert$
is bounded on $\mathbb{R}_{\geqslant0}$, which implies $\mathbb{E}_{x}\left[\int_{0}^{t}\left\vert \mathcal{A}V\left(X_{s}\right)\right\vert \mathrm{d}s\right]<\infty$
for all $t\geqslant0$. \end{proof}

We are ready to prove the ergodicity of the JCIR process $(X_{t})_{t\geqslant0}$
under \eqref{eq:integrability of log z}.\\

\textit{Proof of Theorem} \ref{thm:ergodicity_of_y_x} (a).
In view of \cite[Theorem 5.1]{MR1234295}, to prove the ergodicity
of the JCIR process $(X_{t})_{t\geqslant0}$, it is enough to check
that
\begin{enumerate}
\item[(i)] $(X_{t})_{t\geqslant0}$ is a non-explosive (Borel) right process
(see, e.g., \cite[p.38]{MR958914} or \cite[p.67]{MR2250510} for
a definition of a (Borel) right process);
\item[(ii)] all compact sets of the state space $\mathbb{R}_{\geqslant0}$ are
petite for some skeleton chain (see \cite[p.500]{MR1234294} for a definition);
\item[(iii)] there exist positive constants $c$, $M$ such that
\begin{equation}
(\mathcal{A}V)(x)\leqslant-c+M\mathbbm{1}_{K}(x),\quad x\in\mathbb{R}_{\geqslant0},\label{eq:ergodicity generator condition}
\end{equation}
for some compact subset $K\subset\mathbb{R}_{\geqslant0}$, where
$V(x)=\log(1+x)$, $x\in\mathbb{R}_{\geqslant0}$.
\end{enumerate}
We proceed to prove (i)-(iii).\\

In view of \cite[Corollary 4.1.4]{MR2250510}, $(X_{t})_{t\geqslant0}$
is a right process, since it possesses the Feller property as an affine
process (see \cite[Theorem 2.7]{MR1994043}).\\

According to Proposition \ref{prop:existence and positivity of the density function},
we can proceed in the very same way as in Jin \textit{et al.} \cite[Theorem 1]{MR3451177}
to see that for each $n\in\mathbb{Z}_{\geqslant0}$ the $\delta$-skeleton chain $X_{n\delta}$, $\delta>0$ being a constant, is
irreducible with respect to the Lebesgue measure on $\mathbb{R}_{\geqslant0}$.
Since $(X_{t})_{t\geqslant0}$ has the Feller property, the claim
(ii) now follows from \cite[Proposition 6.2.8]{MR2509253}.\\

Finally, we prove (iii). As shown in the proof of Lemma \ref{lem:extended generator of the foster lyapunov function},
$\vert\mathcal{A}V\vert$ is bounded on $\mathbb{R}_{\geqslant0}$.
Therefore, to get \eqref{eq:ergodicity generator condition}, it suffices
to show that $\lim_{x\to\infty}\mathcal{A}V(x)$ exists and is negative.
As before, we write $\mathcal{A}V=\mathcal{D}V+\mathcal{J}V$. It
is easy to see that
\begin{align*}
\lim_{x\to\infty}(\mathcal{D}V)(x)=\lim_{x\to\infty}\left[(a-bx)(1+x)^{-1}-\frac{\sigma^{2}}{2}x(1+x)^{-2}\right]=-b.
\end{align*}
Next, we consider the jump part $\mathcal{J}V$. Note that
\[
V(x+z)-V(x)=\log\left(1+\frac{z}{1+x}\right)\longrightarrow0\quad\text{as }x\to\infty.
\]
On the other hand, by \eqref{eq: mean value thm for V} and \eqref{eq:estimate for V(x+z)-V(x) if z>1},
we have

\[
|V(x+z)-V(x)|\leqslant z\mathbbm{1}_{\lbrace z\leqslant1\rbrace}+\left[\log(2)+\log(z)\right]\mathbbm{1}_{\lbrace z>1\rbrace},
\]
where the function on the right-hand side is integrable with respect
to $\nu$. By the dominated convergence theorem, we obtain $\lim_{x\to\infty}(\mathcal{J}V)(x)=0$.
This completes the proof.\qed

\begin{rem} According to the discussion after \cite[Proposition 2.5]{MR663900},
a direct but important consequence of our ergodic result is the following:
under the assumptions of Theorem \textup{\ref{thm:ergodicity_of_y_x} (a)},
for all Borel measurable functions $f:\mathbb{R}_{\geqslant0}\to\mathbb{R}$
with $\int_{\mathbb{R}_{\geqslant0}}\vert f(x)\vert\pi(\mathrm{d}x)<\infty$,
it holds
\begin{equation}
\mathbb{P}\left(\lim_{T\to\infty}\frac{1}{T}\int_{0}^{T}f(X_{s})\mathrm{d}s=\int_{\mathbb{R}_{\geqslant0}}f(x)\pi(\mathrm{d}x)\right)=1.\label{eq: law of large numbers}
\end{equation}
The convergence \eqref{eq: law of large numbers} may be very useful
for parameter estimation of the JCIR process. \end{rem}

\section{Exponential ergodicity of the JCIR process}
\label{sec:foster lyapunov}

Our aim of this section is to show that the JCIR process $X$ is exponentially
ergodic if
\begin{equation}
\int_{\{z>1\}}z^{\kappa}\nu(dz)<\infty\quad\text{for some }\kappa>0\text{.}\label{conditon: kappa moment}
\end{equation}

As in previous works (see, e.g., \cite{MR3451177} and \cite{2016arXiv160706254J}) the following proposition will play an essential role in proving exponential ergodicity of the JCIR process $X$, provided that \eqref{conditon: kappa moment} holds.

\begin{prop}\label{lem:foster_lyapunov_fct} Suppose \eqref{conditon: kappa moment} is true. Let $V\in C^{2}(\mathbb{R}_{\geqslant0},\mathbb{R})$ be
nonnegative and such that $V(x)=x^{\kappa\wedge1}$ for $x\geqslant1$.
Then there exist positive constants $c,M$ such that
\begin{equation}
\mathbb{E}_{x}\left[V(X_{t})\right]\leqslant e^{-ct}V(x)+\frac{M}{c}\label{eq:Foster-Lyapunov 1}
\end{equation}
for all $(t,x)\in\mathbb{R}_{\geqslant0}^{2}$. \end{prop}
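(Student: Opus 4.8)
The plan is to verify a Foster--Lyapunov drift inequality of the form $(\mathcal{A}V)(x)\leqslant -cV(x)+M$ on all of $\mathbb{R}_{\geqslant0}$ and then integrate it. Throughout write $\gamma:=\kappa\wedge1\in(0,1]$, so that $V(x)=x^{\gamma}$ for $x\geqslant1$, while $V$ is nonnegative and $C^{2}$ with bounded first and second derivatives on the compact set $[0,1]$. I would first note that, whatever the value of $\kappa$, condition \eqref{conditon: kappa moment} forces $\int_{\{z>1\}}z^{\gamma}\nu(\mathrm{d}z)<\infty$: if $\kappa\leqslant1$ this is \eqref{conditon: kappa moment} itself, while if $\kappa>1$ then $\gamma=1$ and $z\leqslant z^{\kappa}$ on $\{z>1\}$.

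For the drift inequality I would split $\mathcal{A}=\mathcal{D}+\mathcal{J}$ as in Section~\ref{sec:prelim}. For $x\geqslant1$ one has $V'(x)=\gamma x^{\gamma-1}$ and $V''(x)=\gamma(\gamma-1)x^{\gamma-2}$, so $(\mathcal{D}V)(x)=-b\gamma x^{\gamma}+a\gamma x^{\gamma-1}+\tfrac12\sigma^{2}\gamma(\gamma-1)x^{\gamma-1}$, whose leading term is $-b\gamma x^{\gamma}=-b\gamma V(x)$ and whose remaining terms are $O(x^{\gamma-1})$, hence bounded on $\{x\geqslant1\}$ since $\gamma\leqslant1$. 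For the jump part, the subadditivity bound $(x+z)^{\gamma}\leqslant x^{\gamma}+z^{\gamma}$ together with the mean value estimate $|V(x+z)-V(x)|\leqslant z\,\|V'\|_{\infty}$ for small $z$ gives the uniform bound $|(\mathcal{J}V)(x)|\leqslant \|V'\|_{\infty}\int_{\{z\leqslant1\}}z\,\nu(\mathrm{d}z)+C\int_{\{z>1\}}z^{\gamma}\nu(\mathrm{d}z)<\infty$. On $[0,1]$ both $V$ and $\mathcal{A}V$ are bounded, so taking $c:=b\gamma$ we obtain $(\mathcal{A}V)(x)+cV(x)\leqslant M$ for all $x\geqslant0$, with the finite (and, after possibly enlarging, positive) constant $M:=\sup_{x\geqslant0}\big((\mathcal{A}V)(x)+cV(x)\big)$.

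The second step is to show that $V$ lies in the domain of the extended generator, so that $\mathbb{E}_{x}[V(X_{t})]=V(x)+\mathbb{E}_{x}\big[\int_{0}^{t}(\mathcal{A}V)(X_{s})\,\mathrm{d}s\big]$, exactly as in Lemma~\ref{lem:extended generator of the foster lyapunov function}. I would apply It\^o's formula to $V(X_{t})$ and check that the resulting stochastic integrals are genuine martingales: the diffusion term is controlled via $X_{s}V'(X_{s})^{2}\leqslant C(1+X_{s}^{\kappa})$, the compensated small-jump term via $z^{2}$ on $\{z\leqslant1\}$, and the compensated big-jump term via the uniform estimate $|V(x+z)-V(x)|\leqslant Cz^{\gamma}$ on $\{z>1\}$. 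Since $V(x)\leqslant C(1+x^{\kappa})$ and $|(\mathcal{A}V)(x)|\leqslant C(1+x^{\kappa})$, the finiteness of all these expectations reduces to the uniform moment estimate $\sup_{s\in[0,t]}\mathbb{E}_{x}[X_{s}^{\kappa}]<\infty$ of Corollary~\ref{prop:sup moments exist}, which applies precisely because \eqref{conditon: kappa moment} holds (equivalently, condition (iii) of Theorem~\ref{thm:fractional moments}). I expect this to be the main obstacle: the big-jump compensated integral must be handled with some care, and the entire step rests on having these $\kappa$-moment bounds at our disposal.

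Finally I would conclude by an integrating-factor argument. Setting $h(t):=\mathbb{E}_{x}[V(X_{t})]$, the extended generator identity shows that $h$ is absolutely continuous with $h'(t)=\mathbb{E}_{x}[(\mathcal{A}V)(X_{t})]\leqslant -ch(t)+M$ for almost every $t$. Hence $\frac{\mathrm{d}}{\mathrm{d}t}\big(e^{ct}h(t)\big)\leqslant Me^{ct}$, and integrating from $0$ to $t$ yields $e^{ct}h(t)\leqslant V(x)+\frac{M}{c}(e^{ct}-1)$, that is $\mathbb{E}_{x}[V(X_{t})]\leqslant e^{-ct}V(x)+\frac{M}{c}(1-e^{-ct})\leqslant e^{-ct}V(x)+\frac{M}{c}$, which is \eqref{eq:Foster-Lyapunov 1}. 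As an alternative to the extended-generator route, one may instead apply It\^o directly to $e^{ct}V(X_{t})$ and take expectations after localizing by the hitting times of $[n,\infty)$, using non-explosion and Fatou's lemma to pass to the limit and remove the localization.
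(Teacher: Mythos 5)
Your proposal is correct and follows essentially the same route as the paper: the drift inequality $(\mathcal{A}V)(x)\leqslant -cV(x)+M$ with $c=b(\kappa\wedge1)$, the verification via It\^o's formula that the local martingale terms are true martingales using the uniform moment bound of Corollary~\ref{prop:sup moments exist}, and a Gronwall-type integration. The only (harmless) differences are organizational: you treat $\kappa\geqslant1$ and $\kappa<1$ uniformly through $\gamma=\kappa\wedge1$, whereas the paper dispatches $\kappa\geqslant1$ separately via a first-moment estimate, and you integrate the differential inequality for $h(t)=\mathbb{E}_x[V(X_t)]$ rather than applying It\^o directly to $e^{ct}V(X_t)$.
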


\begin{proof} If $\kappa\geqslant1$, then it follows from \eqref{conditon: kappa moment}
that $\int_{\{z>1\}}z\nu(dz)<\infty$, which, together with \cite[Lemma 3]{MR3451177},
implies
\[
\mathbb{E}_{x}\left[X_{t}\right]\leqslant xe^{-bt}+M_{1},\quad t>0,x\geqslant0,
\]
for some constant $0<M_{1}<\infty$. In this case, we have
\begin{align*}
\mathbb{E}_{x}\left[V(X_{t})\right] & =\mathbb{E}_{x}\left[V(X_{t})\mathbbm{1}_{\lbrace X_{t}>1\rbrace}\right]+\mathbb{E}_{x}\left[V(X_{t})\mathbbm{1}_{\lbrace X_{t}\leqslant1\rbrace}\right]\\
 & \leqslant\mathbb{E}_{x}\left[X_{t}\right]+\sup_{y\in[0,1]}|V(y)|\\
 & \leqslant xe^{-bt}+M_{1}+\sup_{y\in[0,1]}|V(y)|\\
 & \leqslant\left(V(x)+1\right)e^{-bt}+M_{1}+\sup_{y\in[0,1]}|V(y)|\\
 & \leqslant V(x)e^{-bt}+M_{2},
\end{align*}
where $M_{2}:=1+M_{1}+\sup_{y\in[0,1]}|V(y)|<\infty$ is a constant.
Hence \eqref{eq:Foster-Lyapunov 1} is true when $\kappa\geqslant1$.
So in the following we assume $0<\kappa<1$.

Define $g(t,x):=\exp(ct)V(x)$, where $c\in\mathbb{R}_{>0}$ is a
constant to be determined later. Then,
\begin{align*}
g_{t}'(t,x) & :=\frac{\partial}{\partial t}g(t,x)=ce^{ct}V(x),\\
g_{x}'(t,x) & :=\frac{\partial}{\partial x}g(t,x)=\begin{cases}
\kappa e^{ct}x^{\kappa-1}, & x>1,\\
e^{ct}V'(x), & x\in[0,1],
\end{cases}\\
g_{x}''(t,x) & :=\frac{\partial^{2}}{\partial x^{2}}g(t,x)=\begin{cases}
\kappa(\kappa-1)e^{ct}x^{\kappa-2}, & x>1,\\
e^{ct}V''(x), & x\in[0,1].
\end{cases}
\end{align*}
Applying It\^o's formula for $g(t,X_{t})$, we obtain
\begin{equation}
g(t,X_{t})-g(0,X_{0})=\int_{0}^{t}(\mathcal{L}g)(s,X_{s})\mathrm{d}s+\int_{0}^{t}g_{s}'(s,X_{s})\mathrm{d}s+M_{t}(g),\quad t\geqslant0,\label{eq:ito formula to g}
\end{equation}
where the operator $\mathcal{L}$ is given by $(\mathcal{L}g)(s,X_{s})=\exp\lbrace cs\rbrace(\mathcal{A}V)(X_{s})$
with $\mathcal{A}$ as in \eqref{eq:infinitesimal generator of JCIR}
and
\begin{align*}
M_{t}(g) & :=\sigma\int_{0}^{t}\sqrt{X_{s}}g_{x}'(s,X_{s})\mathrm{d}B_{s}+\int_{0}^{t}\int_{0}^{\infty}\left(g(s,X_{s-}+z)-g(s,X_{s-})\right)\widetilde{N}(\mathrm{d}s,\mathrm{d}z)\\
 & \thickspace=G_{t}(g)+J_{t}(g),\quad\text{for all }t\geqslant0.
\end{align*}
We will complete the proof in three steps.\\

``\emph{Step 1}'': We check that $(M_{t}(g))_{t\geqslant0}$ is
a martingale with respect to the filtration $(\mathcal{F}_{t})_{t\geqslant0}$.
First, note that
\[
G_{t}(g):=\sigma\int_{0}^{t}\frac{\partial}{\partial x}g(s,X_{s})\sqrt{X_{s}}\mathrm{d}B_{s},\quad t\geqslant0,
\]
is a square-integrable martingale with respect to the filtration $(\mathcal{F}_{t})_{t\geqslant0}$.
Indeed, for each $t\geqslant0$, we have
\begin{align}
 & \mathbb{E}_{x}\left[\left(\sigma\int_{0}^{t}\sqrt{X_{s}}g_{x}'(s,X_{s})\mathrm{d}B_{s}\right)^{2}\right]\nonumber \\
 & \quad=\sigma^{2}\int_{0}^{t}e^{2cs}\mathbb{E}\left[\mathbbm{1}_{\lbrace X_{s}\leqslant1\rbrace}X_{s}V'(X_{s})\right]\mathrm{d}s+\sigma^{2}\kappa^{2}\int_{0}^{t}e^{2cs}\mathbb{E}\left[\mathbbm{1}_{\lbrace X_{s}>1\rbrace}X_{s}^{2\kappa-1}\right]\mathrm{d}s.\label{eq1: proof, lemma e ergod}
\end{align}
Clearly, we have $|\mathbbm{1}_{\lbrace X_{s}\leqslant1\rbrace}X_{s}V'(X_{s})|\leqslant\sup_{y\in[0,1]}|V'(y)|<\infty$,
which implies that the first integral on the right-hand side of \eqref{eq1: proof, lemma e ergod} is finite. Since $|\mathbbm{1}_{\lbrace X_{s}>1\rbrace}X_{s}^{2\kappa-1}|\leqslant|X_{s}|^{\kappa}$,
by \eqref{conditon: kappa moment} and Proposition \ref{prop:sup moments exist},
we see that the second integral on the right-hand side of \eqref{eq1: proof, lemma e ergod}
is finite as well. Hence, $(G_{t}(g))_{t\geqslant0}$ is a square-integrable martingale with respect to the filtration $(\mathcal{F}_{t})_{t\geqslant0}$.

Next, we prove that $J_{t}(g)$, $t\geqslant0$, is a martingale with
respect to the filtration $(\mathcal{F}_{t})_{t\geqslant0}$. We define
\begin{align*}
J_{\ast,t}(V) & :=\int_{0}^{t}\int_{\lbrace z\leqslant1\rbrace}e^{cs}\left(V(X_{s-}+z)-V(X_{s-})\right)\widetilde{N}(\mathrm{d}s,\mathrm{d}z),\quad t\geqslant0,\\
J_{t}^{\ast}(V) & :=\int_{0}^{t}\int_{\lbrace z>1\rbrace}e^{cs}\left(V(X_{s-}+z)-V(X_{s-})\right)\widetilde{N}(\mathrm{d}s,\mathrm{d}z),\quad t\geqslant0.
\end{align*}
So $J_{t}(g)=J_{\ast,t}(V)+J_{t}^{\ast}(V)$ for $t\geqslant0$. In
what follows, we establish some elementary inequalities for $V$.
For $y\geqslant1$, we have
\begin{align}
\mathbbm{1}_{\lbrace z\leqslant1\rbrace}(z)|V(y+z)-V(y)| & =\mathbbm{1}_{\lbrace z\leqslant1\rbrace}(z)\left((y+z)^{\kappa}-y^{\kappa}\right)\nonumber \\
 & =\mathbbm{1}_{\lbrace z\leqslant1\rbrace}(z)y^{\kappa}\left(\left(1+\frac{z}{y}\right)^{\kappa}-1\right)\nonumber \\
 & \leqslant\mathbbm{1}_{\lbrace z\leqslant1\rbrace}(z)\kappa y^{\kappa-1}z\leqslant\mathbbm{1}_{\lbrace z\leqslant1\rbrace}(z)z,\label{eq:first elementary ineq}
\end{align}
where we used Bernoulli's inequality to obtain the first inequality
in \eqref{eq:first elementary ineq}. Moreover, it is easy to see
that for $y\geqslant1$,
\begin{equation}
\mathbbm{1}_{\lbrace z>1\rbrace}(z)|V(y+z)-V(y)|\leqslant\mathbbm{1}_{\lbrace z>1\rbrace}(z)\left(y^{\kappa}+z^{\kappa}-y^{\kappa}\right)\leqslant\mathbbm{1}_{\lbrace z>1\rbrace}(z)z^{\kappa}.\label{eq:second elementary ineq}
\end{equation}
For $y\in[0,1]$, using the mean value theorem, we get
\begin{equation}
\mathbbm{1}_{\lbrace z\leqslant1\rbrace}(z)|V(y+z)-V(y)|\leqslant z\sup_{y\in[0,2]}|V'(y)|\leqslant c_{1}z,\label{eq:third elementary ineq}
\end{equation}
for some constant $c_{1}>0$. Finally, for $y\in[0,1]$, again by
Bernoulli's inequality, we have
\begin{align}
\mathbbm{1}_{\lbrace z>1\rbrace}(z)|V(y+z)-V(y)| & \leqslant\mathbbm{1}_{\lbrace z>1\rbrace}(z)\left((y+z)^{\kappa}+|V(y)|\right)\nonumber \\
 & \leqslant\mathbbm{1}_{\lbrace z>1\rbrace}(z)\left(z^{\kappa}\left(1+\kappa\frac{y}{z}\right)+|V(y)|\right)\nonumber \\
 & \leqslant\mathbbm{1}_{\lbrace z>1\rbrace}(z)\left(z^{\kappa}+1+|V(y)|\right)\nonumber \\
 & \leqslant\mathbbm{1}_{\lbrace z>1\rbrace}(z)\left(z^{\kappa}+c_{2}\right),\label{eq:fourth elementary ineq}
\end{align}
where $c_{2}:=1+\sup_{y\in[0,1]}|V(y)|<\infty$ is a positive constant.
Now, from \eqref{eq:first elementary ineq} and \eqref{eq:third elementary ineq},
we deduce that
\begin{align*}
\mathbb{E}_{x} & \left[\int_{0}^{t}\int_{\lbrace z\leqslant1\rbrace}e^{cs}|V(X_{s-}+z)-V(X_{s-})|\nu(\mathrm{d}z)\mathrm{d}s\right]\\
 & \quad\leqslant(1+c_{1})\int_{0}^{t}e^{cs}\mathrm{d}s\int_{\lbrace z\leqslant1\rbrace}z\nu(\mathrm{d}z)<\infty,\quad t\geqslant0.
\end{align*}
It follows from \cite[p.62 and Lemma 3.1]{MR1011252} that $(J_{\ast,t}(V))_{t\geqslant0}$
is a martingale with respect to the filtration $(\mathcal{F}_{t})_{t\geqslant0}$.
Using \eqref{eq:second elementary ineq} and \eqref{eq:fourth elementary ineq},
we obtain
\begin{align*}
\mathbb{E}_{x} & \left[\int_{0}^{t}\int_{\lbrace z>1\rbrace}e^{cs}|V(X_{s-}+z)-V(X_{s-})|\nu(\mathrm{d}z)\mathrm{d}s\right]\\
 & \quad\leqslant\int_{0}^{t}\int_{\lbrace z>1\rbrace}e^{cs}\left(z^{\kappa}+c_{2}\right)\nu(\mathrm{d}s)\mathrm{d}s\\
 & \quad=\int_{0}^{t}e^{cs}\mathrm{d}s\left(\int_{\lbrace z>1\rbrace}z^{\kappa}\nu(\mathrm{d}z)+c_{2}\nu(\lbrace z>1\rbrace)\right)<\infty,\quad t\geqslant0.
\end{align*}
As a consequence, we see that $(J_{t}^{\ast}(V))_{t\geqslant0}$ is
also a martingale. Clearly, $(M_{t}(g))_{t\geqslant0}=(G_{t}(g)+J_{t}(g))_{t\geqslant0}$
is now a martingale with respect to the filtration $(\mathcal{F}_{t})_{t\geqslant0}$.\\

``\emph{Step 2}'': We determine the constant $c\in\mathbb{R}_{>0}$
and find another positive constant $M<\infty$ such that
\begin{equation}
(\mathcal{A}V)(y)=(\mathcal{D}V)(y)+(\mathcal{J}V)(y)\leqslant-cV(y)+M,\quad y\in\mathbb{R}_{\geqslant0}.\label{eq:Foster-Lyapunov 2}
\end{equation}
Consider the jump part $\mathcal{J}V=\mathcal{J}_{\ast}V+\mathcal{J}^{\ast}V$,
where $\mathcal{J}_{\ast}V$ and $\mathcal{J}^{\ast}V$ are defined
by \eqref{eq:small jump part} and \eqref{eq:big jump part}, respectively.
For all $x\in\mathbb{R}_{\geqslant0}$, using \eqref{eq:first elementary ineq}
and \eqref{eq:third elementary ineq}, we obtain
\[
(\mathcal{J}_{\ast}V)(y)=\int_{\lbrace z\leqslant1\rbrace}|V(y+z)-V(y)|\nu(\mathrm{d}z)\leqslant(1+c_{1})\int_{\lbrace z\leqslant1\rbrace}z\nu(\mathrm{d}z)<\infty.
\]
For $\mathcal{J}^{\ast}V$, we can use \eqref{eq:second elementary ineq}
and \eqref{eq:fourth elementary ineq} to obtain that for all $y\in\mathbb{R}_{\geqslant0}$,
\begin{align*}
(\mathcal{J}^{\ast}V)(x) & =\int_{\lbrace z>1\rbrace}|V(y+z)-V(y)|\nu(\mathrm{d}z)\\
 & \leqslant\int_{\lbrace z>1\rbrace}z^{\kappa}\nu(\mathrm{d}z)+c_{2}\nu(\lbrace z>1\rbrace)<\infty.
\end{align*}
Next, we estimate $\mathcal{D}V$. Since,
\[
V'(x)=\kappa x^{\kappa-1}\quad\text{and}\quad V''(x)=\kappa(\kappa-1)x^{\kappa-2}\quad\text{for  }x\geqslant1,
\]
we see that
\begin{align*}
(\mathcal{D}V)(x) & =(a-bx)V'(x)+\frac{\sigma^{2}x}{2}V''(x)\\
 & =-b\kappa x^{\kappa}+\kappa x^{\kappa-1}\left(a+\frac{\sigma^{2}(\kappa-1)}{2}\right)\leqslant-b\kappa x^{\kappa}+c_{3}
\end{align*}
for all $x\geqslant1$. Here $c_{3}<\infty$ is a positive constant.
After all we get that for all $x\geqslant1$,
\[
(\mathcal{A}V)(x)\leqslant-b\kappa V(x)+c_{4}
\]
where $c_{4}<\infty$ is a positive constant. By noting that $V\in C^{2}(\mathbb{R}_{\geqslant0},\mathbb{R})$,
we see that
\[
\sup_{y\in[0,1]}|V(y)|<\infty\quad\mathrm{and\quad}\sup_{y\in[0,1]}|(\mathcal{A}V)(y)|<\infty.
\]
Consequently, \eqref{eq:Foster-Lyapunov 2} holds for all $x\geqslant0$.\\

``\emph{Step 3}'': We prove \eqref{eq:Foster-Lyapunov 1}. Note
that $(\mathcal{L}g)(s,x)=\exp\lbrace cs\rbrace(\mathcal{A}V)(x)$.
By \eqref{eq:ito formula to g}, \eqref{eq:Foster-Lyapunov 2} and
the martingale property of $(M_{t}(g))_{t\geqslant0}$, we obtain
that for all $(x,t)\in\mathbb{R}_{\geqslant0}^{2}$,
\begin{align*}
e^{ct}\mathbb{E}_{x}\left[V(X_{t})\right]-V(x) & =\mathbb{E}_{x}\left[g(t,X_{t})-g(0,X_{0})\right]\\
 & =\mathbb{E}_{x}\left[\int_{0}^{t}\left(e^{cs}(\mathcal{A}V)(X_{s})+ce^{cs}V(X_{s})\right)\mathrm{d}s\right]\\
 & \leqslant\mathbb{E}_{x}\left[\int_{0}^{t}\left(e^{cs}\left(-cV(X_{s})+M\right)+ce^{cs}V(X_{s})\right)\mathrm{d}s\right]\\
 & =\mathbb{E}_{x}\left[\int_{0}^{t}e^{cs}M\mathrm{d}s\right]\leqslant\frac{M}{c}e^{ct}.
\end{align*}
So \eqref{eq:Foster-Lyapunov 1} is true. With this our proof is complete.
\end{proof}

Based on Proposition \ref{lem:foster_lyapunov_fct}, we are now ready
to prove Theorem \ref{thm:ergodicity_of_y_x} (b).\\

\textit{Proof of Theorem} \ref{thm:ergodicity_of_y_x} (b).
In view of Proposition \ref{prop:existence and positivity of the density function}
and Proposition \ref{lem:foster_lyapunov_fct}, to obtain the exponential
ergodicity of $X$, we can follow almost the very same lines as in
the proof of \cite[Theorem 1]{MR3451177}. We remark that the \emph{strong}
\emph{aperiodicity} condition used in the proof of \cite[Theorem 1]{MR3451177}
can be safely replaced by the \emph{aperiodicity} condition (the definition
of aperiodicity can be found in \cite[p.114]{MR2509253}), due to \cite[Theorem 6.3]{MR1174380}. Moreover, the \emph{aperiodicity}
of the skeleton chain can be obtained by following the same arguments
as in part (b) of the proof of \cite[Theorem 6.1]{2016arXiv160706254J}.
This completes the proof.\qed \\

\textbf{Acknowledgements.} The author J. Kremer would like to thank the University of Wuppertal for the financial support through a doctoral funding program.

\def\cprime{$'$}
\providecommand{\bysame}{\leavevmode\hbox to3em{\hrulefill}\thinspace}
\providecommand{\MR}{\relax\ifhmode\unskip\space\fi MR }
\providecommand{\MRhref}[2]{%
  \href{http://www.ams.org/mathscinet-getitem?mr=#1}{#2}
}
\providecommand{\href}[2]{#2}

\end{document}